\newenvironment{myabstract}{\par\noindent
{\bf Abstract . } \small }
{\par\vskip8pt minus3pt\rm}
\newcounter{item}[section]
\newcounter{kirshr}
\newcounter{kirsha}
\newcounter{kirshb}
\newenvironment{enumroman}{\setcounter{kirshr}{1}
\begin{list}{(\roman{kirshr})}{\usecounter{kirshr}} }{\end{list}}
\newenvironment{enumarab}{\setcounter{kirshb}{1}
\begin{list}{(\arabic{kirshb})}{\usecounter{kirshb}} }{\end{list}}
\newtheorem{theorem}{Theorem}[section]
\newtheorem{corollary}[theorem]{Corollary}
\newenvironment{demo}[1]{\noindent{\bf #1.}\upshape\mdseries}
{\nopagebreak{\hfill\rule{2mm}{2mm}\nopagebreak}\par\normalfont}
\theoremstyle{definition}
\newtheorem{example}[theorem]{Example}
\newtheorem{definition}[theorem]{Definition}
\def\C{{\mathfrak{C}}}
\def\Fm{{\mathfrak{Fm}}}
\def\Nr{{\mathfrak{Nr}}}
\def\Fm{{\mathfrak{Fm}}}
\def\A{{\mathfrak{A}}}
\def\B{{\mathfrak{B}}}
\def\C{{\mathfrak{C}}}
\def\D{{\mathfrak{D}}}
\def\Sn{{\mathfrak{Sn}}}
\def\CA{{\bf CA}}
\def\K{{\bf K}}
\def\K{{\bf K}}
\def\(R)RA{{\bf (R)RA}}
\def\Zd{{\mathfrak Zd}}
\def\Co{{\sf Co}}
\def\Ig{{\sf Ig}}
 \def\CA{{\sf CA}}
\def\B{{\sf B}}
\def\G{{\sf G}}
\def\K{{\sf K}}
\def\Nr{{\mathfrak{Nr}}}
\def\Nr{{\mathfrak{Nr}}}
\def\A{{\mathfrak{A}}}
\def\B{{\mathfrak{B}}}
\def\C{{\mathfrak{C}}}
\def\D{{\mathfrak{D}}}
\def\A{{\mathfrak{A}}}
\def\B{{\mathfrak{B}}}
\def\C{{\mathfrak{C}}}
\def\D{{\mathfrak{D}}}
\def\L{{\mathfrak{L}}}
\def\L{{\mathfrak{L}}}
\def\CA{{\bf CA}}
\def\G{{\bf G}}
\def\O{{\mathfrak{O}}}
\title{Back and forth between algebraic geometry, algebraic logic, sheaves, and forcing}
\author{Tarek Sayed Ahmed}
\begin{document}
\maketitle

\begin{myabstract} We study sheaves in the context of a duality theory for lattice structure endowed with extra operations, 
and in the context of forcing in a topos.
Using Sheaf duality theory of Comer for cylindric algebras, 
we give a representation theorem of of distributive bounded lattices expanded by modalities (functions distributing over joins)
as the continuous sections of sheaves. 
Our representation is defined via a contravariant functor from an algebraic category to a category of sheaves. 
We show that if our category is a small site (cartesian closed with a stability condition on pullbacks), 
then we can define a notion of forcing using this category. In particular, we define fuzzy forcing by interpreting
the additional Lukasiewicz conjunction $\otimes$ as induced by a tensor product in the target monodial 
category of pre-sheaves.
We also study topoi as semantics for higher order logic of many sorted theories in connection to set theory, and the quasi-topoi
based on $MV$ algebras, for fuzzy logic. We show that the interpretation of  a theory $T$, in this case
into $Set_{\Omega}$ where $\Omega$ is an almost sub-object classifier in a quasi-topos $CAT$  defined from $T$,
is completed by defining semantics for $\otimes$, and this is done similarly to its defining clause in forcing.
We give applications to many-valued logics  
and  various modifications of  first order logic and multi-modal logic, set in an algebraic framework. 
\footnote{Mathematics Subject Classification. 03G15; 06E25

Key words: Algebraic logic, lattices, sheaves, epimorphisms}
\end{myabstract}

\section{Introduction}

Algebraic logic is an interdisciplinay field between algebra and logic, in fact 
it is the natural interface between logic and  universal algebra. It is similar in this respect to other branches
of mathematics  like algebraic topology and algebraic geometry.
In the latter case, for example, one addresses geometric problems using algebraic machiney but the underlying 
ideas are guided by geometric intuition. In agebraic logic a similar task is implemented; 
attacking problems in various logical systems,
using algebraic machinery, except that now the intuition is based on ideas stemming from formal logic. 
There is, yet another geometric twist to Tarskian algebraic logic, where representable cylindric algebras can be visualized
as a multi - dimensional geometry, the dimension can be a transfinite ordinal, hence the term 'cylindric', 
which actually means that the geometric set theoretic interpretation of the existential quantifies in first order is nothing more than forming
cylinders, when interpreted in set algebras based on first order models.

In algebraic geometry the concept of a {\it ringed space} is important. 
A ringed space, is basically a site (a cartesian closed category, with additional localization properties), 
or more concretely a topological space, 
where each point in the underlying set of the topology, is associated with a ring, called a germ or a stalk, such that the 
this topological space with an amalgamation of the local rings, form a {\it sheaf}. 

A sheaf   is a tool for systematically tracking locally defined data attached to the open sets 
of the topological space. This  data can be restricted to partitions of open sets into smaller sets, 
and the data assigned to an open set is equivalent to all collections of compatible data assigned to the collection of smaller  sets 
covering the original one. For example, such data can consist of the  rings defined on each such set. 
Sheaves are by design quite general and abstract objects, and their correct definition is rather technical. 
They exist in several varieties, such as sheaves of  sets, or sheaves of rings,
depending on the type of data assigned to open sets. 

In the first  paper we will be primarily concerned with sheaves
associated to algebraizations of a multitude of predicate logics. The topological space on which our sheaves
are based is the prime spectrum of the zero-dimensional 
subreducts of the algebra in question, endowed with the Zariski topology, and the germs 
will be homomorphic images of the algebra, determined by the ideals generated by the prime ideals
of the zero dimensional part.

In all cases we are dealing with sheaves of 'locally algebraised' topological spaces.   

In the second part of the paper, we will be concerned with the notion of pre-sheaves on Grothendieck sites, but
from the logical point of view and not the geometric one.
In more detail, we will study forcing in a topos, which is an abstraction of the category of sets. 
On the hand, an {\it elementary} topos is a category that is closed under familar operations on sets (like finite products),
on the other hand a general 
topoi can be viewed as a functor from a site to a category of pre-sheaves.

Boolean algebras correspond to sets, example via Stone representation theorem, 
and Boolean algebras are also used in forcing by perturbing the ground model, forming Boolean valued
models. Such forcing can be formulated in the context of the topos of sets, but it lends itself to 
many generalizations. For example one can study forcing, by viewing  the comulative 
heirarchy of sets as pre-sheaves defined on Heyting algebras, or sheaves defined on a topological space.

In algebraic geometry, affine varieties also carry a Zarski  topology by declaring the closed sets 
to be to be precisely the affine algebraic sets. 

Given an affine variety $A^n$ over a field $K$, the ring associated with $A^n$ is the ring $K[x_1,\ldots x_n]$, which is the polynomial ring
in $n$ variables over the field $K$. If $V\subseteq {}^nA$, and $I(V)$ is 
the ideal of all functions vanishing on $V$, that is $\{f\in K[x_1,..x_n]: f(s)=0, \forall s\in V\}$, then 
for any affine algebraic set $V$,  the coordinate ring or structure ring of $V$ is the quotient 
of the polynomial ring by this ideal. 

$I(V)$ is a prime ideal in the polynomial ring, and the Zariski topology can be viewed as the topology based
on this prime spectrum. 

This phenomena has a more than one abstract setting. We mention two.

(1) This is a particular case of what is known in the literature of algebraic geometry  as coherent sheaves of modules.
(This is basically obtained by replacing the field $K$ by a ring, and the affine variety by a module).

(2) If $Spec(R)$ is the prime spectrum of a commutative ring $R$, 
then the {\it stalk} at $P$ equals the localization of $R$ at $P$, and this is a local ring.  
Endowed with the Zariski topology, $Spec (R)$,  
commonly augmented also with a sheaf structure, makes it a locally ringed space.

A  typical problem concerning ringed spaces, and in particular affine varities is 
describing the polyonomial ring associated with an affine variety  in terms of the local rings, 
or stalks, given at a point of the variety.

In the first paper part, our points will be theories, or prime ideals in the quantifier free reducts of the Lindenbaum Tarski algebra, or the algebra
of sentences.
Following Comer, we describe the algebra of formulas, corresponding to a given theory as the algebra of continuous sections of a sheaf,
that is, the continuous maps from the prime spectrum of the zero dimensional part of this 
algebra with the Zariski toplogy, to an amalgamation 
of the stalks, endowed with
a natural topology. This theory corresponds to an ideal. 
In our case, the stalks are algebras defined locally at each point of the spectrum, and they can be 
amalgamated by taking ther algebraic product. 
In favourable circumstances, for example in the case of locally finite cylindric algebras,
or quasipolyadic algebras, the stalks turn out to be simple algebras.

Then in this case, what we are actually doing is taking the algebra factored out by the intersection of all maximal ideals 
containing the ideal we started off with.
This algebra turns out {\it naturally 
isomorphic} to the algebra of formulas (in the precise categorial sense).

Our situation also has affinity to the duality of Boolean algebras and Stone spaces, 
in fact, it is a generalization thereof. The Stone duality is obtained from Comer's duality theory
for cylindric algebras, when the stalks are just the two element Boolean algebra.

Expressed in a metalogical setting,
we deal with {\it two algebras} not just one, 
the former is the algebra of formulas, which will have extra operations reflecting quantifiers, the other
is the Boolean algebra of sentences. The Stone space of the algebra of sentences  
is used to define a dual of the algebra of formulas, but this does not capture
the quantifier structure;  the dual we are looking for will actually be a triple,
the Stone space of the algebra of sentences,  a disjoint union of stalks 
(which are homomorphic images of the algebra of formulas, obtained by factoring out this algebra by 
complete extensions of the given theory; every such extension 
gives a stalk), endowed with a natural topology induced, 
by the projection map $\pi$, which projects the stalk at a completion of the theory, to the theory.  

Stone duality thus becomes a special case, because in this case we do not have have quantifiers 
so the algebra of formulas is the same as algebra of sentences.
More rigorously, the stalks are just the two element Boolean algebra, and the duality in this case, reduces to the classical Stone case,
implemented via applying the contravariant Hom functor, with second component a co-separator, twice. 
We also formulate our duality theorem concerning expansions of distributive lattices, as a double application of the Hom 
contravarinat functor.
(This will be further elaborated upon below).

In the concrete case of usual first order logic the stalks are simple algebras, 
so that the disjoint union can be viewed in a natural way as a semisimple 
algebra.  Such algebras are called regular; regular algebras are semisimple, but the converse fails dramatically.
On the other hand, it is not always the case that the stalks are simple algebras nor indeed subdirectly
indecomposable, for other extensions of first order logic.

The idea is taken from Comer, except that here we substantially generalize Comer's duality theory of cylindric algebras.
Comer's results apply to first order logic, particularly, to studying interpolation theorems like Craig interpolation and Beth definabilty.
Our results apply to a plathora of logics, like many valued logics, intuitionistic logic, 
fuzzy logic,  different modifications of first order logic (like finite variable fragments) 
and its extensions like Keislers logics. 

Since Comer dealt with classical logic, the topology used on the algebra of sentences is the Stone topology.
Here we shall deal with algebraic structures, whose dual topology, 
is based on the spectrum of prime ideals (these do not nessecarily coincide with maximal ones), namely, the Zariski topology, 
or ever the weaker Priestly topology. 

To each such structure $\A$, which will be expanded by operators reflecting quantification viewed in the modal sense,
one associates a sheaf, whose first component is a Priestly topology. The latter has underlying set $X$, the set of prime ideals  
of the algebra consisting of zero dimensional elements. These are the elements
that are fixed points of the operators, and indeed form 
a subalgebra of the reduct of the original algebra obtained by discarding the 
modalities. 

Like in the classical case, the second component is both a disjoint union of stalks $\G_x$, $x\in X$ that is endowed
with a topology induced by certain maps, namey the $\sigma_a$s defined below, 
and also can be formulated as an algebraic product.
The third is the projection map.

If one takes the dual of this triple, more concisely the sheaf thereby obtained,  
one can represent the algebra he started off with  
as the algebra of continuous sections of this sheaf. These are maps from $X$ to the disjoint union of stalks $\delta$, and 
the continuity  here is with respect to the Priestly topology on $X$, and the  smallest topology on $\delta$ 
with respect to which the all maps of the form $\sigma_a: X\to \delta$, $\sigma_a(x)=a/\Ig^{\A}x\in \G_x$
are continuous.

So here we are infront of a very natural (natural) isomorphism, or a double dual. 
The representation in this geometric context is implemented by a 
contravariant functor, which describes this process and its inverse. 

One aspect of this duality is that, for example,
there is an isomorphism between the set of certain ideals of $\A$ called
regular ones, onto the lattice of open subsets of $X.$ (This will be proved below). 
A regular ideal is one that gives rise to a simple
stalk.

Furthermore, one can use such duality, to prove theorems formulated in categorial jargon for such algebras,
by working in the dual space of  sheaves. 

A typical example is whether epimorphisms are surjective or not in the given
class,  which is the equivalent (in a very broad context, including multi -dimensonal modal logics)
of whether the logic in question has the Beth definability property, which in turn is equivalent
to whether monomorphisms are injective in the dual space of sheaves. 

Sometimes,  it is much easier to work in the dual world, by turning round arrows and reversing composition; this happens often in 
situations that involve arrows, via morphisms.
A blatant example of such a phenomena is the fact that to prove that epimorphisms are surjective in Boolean algebras, then it is so much
easier to prove that monomorphisms are injective in the category of Stone spaces.

We will do the same here for distributive lattices with extra operators, when we show that if epimorphisms fail to be surjective 
in the class of so called regular algebra then they also  fail to be surjective in the simple algebras. 
Regular algebras are these algebras for which  the lattice of ideals of their zero dimensional part  (the subreduct fixed by the operators)
is isomorphic to the their lattice of ideals. 
Such algebras lie strictly between simple and semi-simple algebras; in algebraic logic they are a common abstraction of locally finite 
algebras and simple algebras.

In the second part of the paper we deal with sheaves in an entirely different context. We will study functors from sites to a category of pre-sheaves.
This general framework covers forcing constructions, both in the clasical and intuitionistic sense. 
In both cases the notion of forcing is a partially ordered set, which  gives rise to a cite.
In the intuitionistic context,  the Heyting algebra obtained consists of open sets of a natural topology on $P$, whihe in the classical sense,
the Boolean algebra is based on the regular open sets on the same
topology. Both are complete.

Both the Heyting and Boolean valued model can be seen as a 
reflective subcategory of a category of pre-sheaves, consisting only of sheaves. 

One can do forcing in a topos, and also one can define semantics of 
higher order logics in 
topoi. 

We will show that these two approaches to higher order logics, which has other re-incarnations in the literature, like type theory and Lambda calculas
are very much related. In fact, we will show that if $F$ is a functor from a small site $C$ 
to a category of pre-sheaves, then
$F$ defines a notion of forcing $C$ and the Yoneda image of $C$, namely, the set of all
 Hom functors on $C$, is the ramified language.  This view will make us define fuzzy forcing by interpreting the Lukasiewicz
conjunction on a monoid $C$, as induced by a tensor product in the target monoidal category.

Finally, we study interpretations of higher order theories in a topoi in the classical sense and in a quasi-topoi in the fuzzy sense.
While the interpretation in the first case, is coded by a Heyting algebra, that is the Tarski Lindenbaum algebra of definable subsets of the 
given theory $T$,
the second interpretation, based on an $MV$ algebra $A$ has no sub-object classifier. But interpreting the Lukasiewicz connection as normal conjunction,
we get a Heyting algebra $H$ that  is an almost sub-object classifier, and the category it generates, namely $V^H$ 
is a subcategory of $V^A$.  
Both the $MV$ algebra, and the Heyting algebra obtained, represent forcing with different sites, 
and using the defining clause for $\otimes$, we can define semantics for the many sorted fuzzy theory, in a quasi-topoi.

\section{Sheaves, back and forth between logic and geometry}

A sheaf is a central concept that occurs in algebraic geometry, and its definition is somewhat technical. 
But roughly a sheaf can be viewed as pre-sheaf with an additional 'glueing' condition, 
and it is best formulated in category theory. 
(A task implemented by the giant  Grothendieck, in the context of algebraic geometry). 

One way to define a pre-sheaf is, to define it as a contravariant functor from a site $X$, or more concretely a topological space $X$ 
to a target concrete category ${\sf C}$. This category ${\sf C}$ can consist of just sets, or groups or rings and sets of maps or whatever.
The objects of $X$ are the open sets and the morphisms are inclusions, while in the target category 
the objects are the concrete objects and the morphisms are the natural ones (in algebras they are the homomorphisms, 
in topological spaces they are
homeomorphisms, and so on).

Let $F$ be a functor defining a pre-sheaf. A {\it stalk} of the pre-sheaf 
is $i^-F(\{x\})$, where $i$ is the inclusion of the one point $\{x\}$ into $X$. 

A sheaf is obtained when one requires, in addition, that the stalks can be `glued', for example if the cite is a topological
space, the stalks are algebras, which is usually
the case, then a glueing, is 'continuously varying' these algebras, and this can be implemented by a 
subdirect product of the algebras, and the continuity is measured  with respect to the smallest
topology on this product that makes a given family of maps from the topological space $X$, 
to the the product, continuous. 
In algebraic geometry the sheaf on an affine variety glueing the local rings
is extracted from the the structure of the variety using the data of the Zariski topology 
and (a subsheaf of the) sheaf of {\it germs}, which is a sheaf of local rings. 

Let $X$ be a topological space, define $O(X)$ whose objects are open sets of $X$ and morphism are inclusions. 
Then a presheaf is a contravariant functor from $O(X)$ to $C$ any category. Keeping the target category as it as, this definition can  be widened 
when the source category does nor arise so concretely from a topololical space.  
If $F$ is a $C$ valued presheaf on $X$, then $U$ is an open set, then  $F(U)$ is called the sections of $F$ over $U$. 
This generalizes when the source category is known as a site. 
We first give the concrete definition, with minimal categorical jargon:

\begin{definition} Let $X$ be a topological space, and let $\bold C$ be a category. A presheaf $F$ on $X$ with values in $\bold C$ is given by:
\begin{itemize}
\item For each open set $U$ of $X$ there corresponds an object $F(U)$ in $C$.
\item For each inclusion $U\subseteq V$ , there corresponds a morphism $rep_{U,V}:F(U)\to F(V)$ in the category $\bold C.$
\end{itemize}. 
These morphisms, are called restriction morphisms. They satisfy the following compatibilty conditons:
\begin{itemize}
\item $res_{U,U}:F(U)\to F(U)$ is the identity
\item For $U\subseteq V\subseteq U$, we have $res_{W,V}\circ res_{V,U}=res_{W,U}$
\end{itemize}

A sheaf is a presheaf that satisfies
\begin{itemize}

\item if $(U_i)$ is a open cover of an open set $U$ and if $s,t\in F(U)$ such that $s\upharpoonright U_i=t\upharpoonright U_i$ 
for each $U_i$ then $s=t$, and

\item  If $(U_i)$ is an open covering of an open set $U$ and for each $i$ there is a section $s_i$ of $F$ over $U_i$ such that for each pair $U_i$,$U_j$
if the covering sets the restrictions of $s_i$ and $s_j$ agree on overlaps: $s_i:\upharpoonright U_i\cap U_j=s_j\upharpoonright U_i\cap U_j$, 
then there is a section $s\in F(U)$ such that $s\upharpoonright U_i=s_i$ for each $i$.
\end{itemize}
\end{definition}

In the above definition, we started with a topogical space. 
In Category theory a Grothendeick topology is a structure on a category $C$ that makes the objects of $C$ 
act like open sets, it need not be a real topology. 
In Grothendeick topology the notion of a collection of open subsets that are stable under  inclusion is replaced by a {\it seive.}
A seive on $C$ is a subfunctor of the functor $Hom(-,C)$ .

The best example to give for sheaves and pre-sheaves, is in the context of forcing formulated in a  topos, 
rather than in the category {\sf Set}, and the above narrow context.
The general definition of a topoi is a functor from a site to a category of pre-sheaves 
(we will use this definition later, in the context of defining a notion
of forcing in cartesian categories).

An {\it elementary topoi}, is a category that enjoys closure of certain operations performed in sets.
More concretely an elementary topoi, is a category that has a subobject classifer (like the $2$ element Boolean algebra)
has finite limits and is cartesian closed.
In ${\sf Set}$ the subobject classifier is the two element set, 
and it is cartesian closed because exponentials exist. 
Also ${\sf Set}$ is closed under finite products and it has equalizers.
Topoi appeared in algebraic geometry, where they are abundant via the generalisation of a sheaf over a topological space. 
Categorially they they are a sheaf over a site (defined after the example).

Here the notion of forcing is not a Boolean algebra in the ground model, but rather a Heyting algebra. 
The Heyting algebra, will be based on a notion of forcing $P$, the site, and the elements of the topoi will be maps from 
$P^{op}$, where we reverse the order of $P$,  to ${\sf Set}$, or rather to certain pre-sheaves in ${\sf Set}.$ 
This gives rise to intuitionistic set theory. This set theory, weaker 
than than that based on the topoi ${\sf Set}$, is extremely interesting, 
for example in such a set theory, Zorns lemma is not equivalent to the axiom of choice.

\begin{example}

Consider the category ${\sf Set}^P$ of sets varying over a partially ordered set. 
Objects are functors $F:P\to Set$, that is maps $F$ which assign to each $p\in P$ a set $F(p)$ and to each $p,q\in P$ 
such that $p\leq q$ a map $F_{pq}:F(p)\to F(q)$ satisfying that for $p\leq q\leq r$ 
$F_{qr}\circ F_{pq}=F_{pr}$. An arrow $\eta:F\to G$ 
is a natural transformation between $F$ and $G$, in this case, an assignment of a map $\eta_p: F(p)\to G(p)$ 
to each $p\in P$ in such a way that $$\eta_q\circ F_{pq}=G_{pq}\circ \eta_p.$$ 
A truth value object $\Omega^P$ is determined as follows. 
A subset $U$ of $O_p=\{q\in P: p\leq q\}$ such that $q\in U$, $r\leq q$ implies
$r\in U$ is said to be upward closed over $P$. Then
$\Omega(p)$ is the family of all upward closed sets over $P$, and $\Omega_{pq}(U)=U\cap O_q$ for $p\leq q$, $U\in \Omega(p)$.
The terminal object $1$in $Set^P$ is the functor $P$ with contant value 
$\{0\}$, $true:1\to \Omega$ $true_p(0)=O_p$.

In this concrete example objects in $Set^{P^{op}}$ , where $P^{op}$ is the partially ordered set obtained by reversing the order on $P$, 
are the {\it pre-sheaves}. If $F$ is a presheaf, $x\in F(p)$ and $q\leq p$, we write $x\upharpoonright_F  q$ for $F_{pq}(x)$.

Now let $H$ be a complete Heyting algeba, associated with the partially ordered set $P$. This has universe  
$\O_p=\{q\in P: q\leq p\}$. (This makes $P$ a topological space, so the above definition applies).
and, further,  $P$ embeds into $H,$ via $p\mapsto \O_p.$ 

The presheaf on $H$ 
is a sheaf if whenever $p=\bigvee_{i\in I}p_i$ in $H$ and $s_i\in F(p_i)$ for all $i\in I$ 
satisfy $$s\upharpoonright_F(p_i\cap p_j)=s_j\upharpoonright_F(p_i\cap p_j)$$ for all $i,j\in I$, then there is a unique $s\in F(U)$ such that
$s|p_i=s_i$ for all $i\in I$. The category $Sheave(H)$ has objects as sheaves and 
as arrows, the arrows between such objects viewed as pre-sheaves. We wil show that this category is equivalent to 
to the Heyting valued model $V^L$.  

Let $L$ be any algebraic structure in a category ${\sf \C}$. 
We can form a category $Set_L$ based on $L$. The objects are pairs $(A,\alpha)$ where $\A\in {\sf C}$, 
$\alpha:A\times A\to L$ is a map such that,
$\alpha(x,y)\leq \alpha(x,x)\land \alpha(y,y)$,  $\alpha(x,y)=\alpha(y,x)$ 
and $\alpha(x,y)\lor(\alpha(y,y)\to \alpha(y,x)\leq \alpha(x,z)$. 
The morphisms between the objects $(A,\alpha)$ $(B,\beta)$ are maps $f:A\to B$ 
such that $(\forall x,y\in A)(\beta(f(x),f(y))\geq \alpha(x,y)$ and $(\forall x\in A)(\alpha(x,x)=\beta(f(x),f(x)))$. 

Now if $L$ is a Heyting algebra or, for that matter a Boolean algebra, one can form $V^L$, the 
universe of sets perturbed by $L$, the usual way.
We we can obtain another (apparently different) category as follows. First we identify elements $u,v\in V^L$ for which $||u=v||=1$.
The objects of $Set^L$ are the identified objects and arrows are those identified $f\in V^L$ such that $||f\text { is a function }||=1$.

All three categories are equivalent.

\end{example}

Passing from the concrete to the abstract, we now give a general definition, of pre-sheaves, and sheaves. 
We abstract away from topological space to sites.
Let $V$ be any category. 
A $V$ valued pre-sheaf $F$ on a category $C$ is a functor. A pre-sheaf is defined to be a Set-valued pre-sheaf, but its domain is $C^{op}$.
That is, a  pre-sheave is a functor from $C^{op}$ to ${\sf Set}$.
In our example the site was a Heyting algebra (viewed naturally as category, with arrows or morphisms reflecting the order). 

If $C$ is the poset of open sets in a topological space  interpreted as a category, then one recovers the usual notion of  
pre-sheaf on a topological space. (This will be our definition of sheaves of locally algebraised sites).

The class of all such functors, for a fixed site $C$, is denoted by $[C^{op}, Set]$ is a category
called the category of pre-sheaves, where a  morphism of pre-sheaves 
is defined to be a natural transformation  of functors. 

This makes the collection of all pre-sheaves into a category.  
The reflective subcategory consisting consisting of {\it pre-sheaves that can be glued} 
was our category ${\sf Set_L}$, in fact it was all three given in the last example. In particular one can view the comulative heirarchy 
$V_L$ as a functor category of {\it sheaves} on $L$ by reversing the order of the latter.
We formalize the notion of glueing. Our next definition tells us what will be glued.
\begin{definition} 

A site is a cartesian category $C$ with a notion of localization, i,.e for every $A\in |C|$ there are given a non empty class $Loc(A)$ 
of families of morphism $(A_i\to A)_{i\in I}$ of $C$ called the localizations of $A$ which are stable under pullbacks.
\end{definition}

A sheaf over $C$ is a functor $F: C^{op}\to Sets$ satisfying the following for every $(f_i:A_i\to A)_{i\in I}\in Loc(A)$:
\begin{enumroman}
\item if $\eta, \mu\in F(A)$ are such that $\eta_i=F(f_i)(\eta)=F(f_i)(\mu)=\mu_i$, for all $i\in I$, then $\eta=\mu$.
\item If $(\eta_i)_{i\in I}$ is a family such that  $\eta_i\in F(A_i)$ for all $i\in I$ and is compatible, i.e the diagram
$$\eta_i\in F(A_i)\to F(A_{i_A}\times A_j)\to F(A_i)\in \eta_i$$
obtained via $F$ from
$$A_i\to A_{i_A}\times A_j\to A_j$$ we have
$$F(\pi_i)(\eta_i)=F(\pi-j)(\eta_i),$$
for all $i,j\in I$, then there is $\eta\in F(A)$ such that
$$\eta_i=F(f_i)(\eta),$$
for all $i\in I$.
\end{enumroman}
We call $Sh(C)$ to be the full subcategory of sheaves of the functor category $[C^{op}, Set]$.
Now this time passing from the concrete to the abstract, we first
make the following observation.
$Set_L$ is both a topoi and a functor 
category consisting of sheaves, so this prompts the following very general definition of a topoi:

\begin{definition} A topos is a a functor category of sheaves. A Grothendieck topos os a functor category of sheaves whose domain is 
a site.
\end{definition}

We give  yet anothera narrow more concrete definition of a sheaf that suffices for our purpose. $X$ will be the prime spectrum of some subalgebra $\B$
of a  reduct of a larger algebra $\A$. The target category will be $Set.$
The functor takes a basic open set $N_a\subseteq X$, $a\in B$, 
to its characteristic function on $X$.   


Formally:

\begin{definition} Let $\A$ and $\B$ be algebras with $\B\subseteq \mathfrak{Rd}\A$.  Let $X$ be the prime spectrum of $\B$.  
A stalk at $x\in X$ is the algebra $\G_x=\A/\Ig^{\A}{x}$. 
A sheaf is a triple $(X, \delta, \pi)$, where $\delta$ is a topological space with underlying set $\bigcup \G_x$, a disjoint union of stalks,
and $\pi:\delta \to X$ is defined by
$\pi(s)= x$, where $s\in G_x$.
For $a\in A$,  let  $\sigma_a: X\to \delta$ by $\sigma_a(x)=a/\Ig^{\A}x\in \G_x$, then 
the  topology on  $\delta$ is the smallest topology  
for which all these functions are open.
\end{definition}
Here, $\delta$ glues the stalks, via a disjoint union, and the topology it gets varies them continously. 
The information coded in the stalks, lifts to a global dimension via this glueing.

We start by concrete example addressing variants and extension first order logics. 
The following discussion applies to $L_n$ (first order logic with $n$ variables), $L_{\omega,\omega}$ (usual first order logic), 
rich logics, Keislers logics with and without equality, finitray logics of infinitary
relations; the latter three logics are infinitary extensions of first order logic, 
though the former and the latter have a finitary flavour, because quantification is taken only
on finitely many variables. These logics have an extensive literature in algebraic logic.

\begin{example}

Let $L$ is a multi-dimensional modal logic, then a theory $T$ of this logic 
can be represented as the continuous sections of  a sheaf.
More precisely, a theory is determined by all complete theories containing it, that is by the Stone space, $X_T$
of $\Zd(\Fm_T)$.
One takes $\delta T$ to be the following disjoint union
$$\delta=\bigcup_{\Delta\in X_T}\{\Delta\}\times \Fm/{\Delta}.$$ 
On $\delta T$, one takes the product topology with basic open sets  
$$B_{\psi,\phi}=\{(\Delta, [\phi]_{\Delta}), \psi\in \Delta, \Delta\in X_{\Gamma}\}.$$
Then $(X_T, \delta T, \pi)$ is a {\it sheaf}, where $\pi: \delta\to X_T$ is defined for $s\in \delta$,
via 
$$\Delta\times \phi/\Delta\mapsto \Delta.$$  
Furthermore, the set $\Gamma(X_T, \delta)$ of continous maps, 
with operations defined pointwise, is isomorphic to $\Fm_T$, 
via
$$\eta(\phi_T)\mapsto \sigma_{\phi},$$ 
where $$\sigma_{\phi}(\Delta)=\phi_{\Delta}.$$ 
 
\end{example}
The glueing of the $G_x$'s amounts to taking the product  $\prod_{\Delta} \Fm/\Delta$ which is a quotient of $\Fm$ by 
$\bigcap T_i$ where each $T_i$ is a complete extension of $T$. 
So each stalk, gives some information about $T$, all together gives an exact information 
about $T$.  

\begin{example} Let $\A=\prod_{i\in I}\B_i$, where $\B_i$ are directly indecomposable $BAO$s. Then
$\Zd\A= {}^I2$ and $X(\A)$ is the Stone space of this algebra.
The stalk $\delta_{M}(\A)$ of $\A^{\delta}$ over $M\in X(\A)$ is the ultraproduct 
$\prod_{i\in I}\B_i/F$  where $F$ is the ultrafilter on $\wp(I)$  corresponding to $M$.
\end{example}

\section{Weaker structures}

Now we define certain topologies, that give rise to classes of topological spaces,
that are duals to various algebraic structures all having a distributive lattice reduct.

\begin{definition}

Let $(X,\leq )$ is a partially ordered set, and $\tau$ be a topology on $X$.
$(X,\tau)$ is called a Priestly space if

(a) $\tau$ is a Stone space, 

(b) For any $x,y\in X$ such that $x\nleq y$ there is a downward clopen set $U$ such that $y\in U$ and $x\notin U$.
(Downward here, means that when $u\in U$ and $v\leq u$, then $v\in U$.

\end{definition}

\begin{definition}
\begin{enumarab} 
\item A non-empty subset  $I$ of a partially ordered set $(P, \leq )$ is an ideal if the following conditions hold:

(a) For every $x\in I$, $y \leq x$ implies that $y\in I$ ($I$ is a lower set).

(b) For every $x,y\in L$ there is a $z\in I$ such that  $x\leq z$ and $y\leq z$ ($I$ is a directed set).
\item $I$ as above is a prime ideal if for every elements $x$ and $y$ in $P$, $x\land y\in P$ implies $x\in P$ or $y\in I.$ Here $x\land y$ denotes 
$inf\{x,y\}$; it is maximal if it is not properly contained in any proper ideal.

\item A lattice is simple if has only the universal congruence and the identity one.

\end{enumarab}
\end{definition}

\begin{definition}

Let $V$ be the class of bounded distributive lattices, and let $L\in V$.
We consider lattices as algebraic structurses $(L, \land , \lor, 0, 1)$.
Then  $Spec(L)$, the set of prime ideals,  endowed with the the topology whose base is of the 
form $N_a=\{P\in Spec(L): a\notin P\}$ and their complements
is called the Priestly space corresponding to $L$, or simply, the Priestly space of $L$.
\end{definition}

Let $Pries$ be the category of Priestly spaces, where morphisms are homeomorphisms.
We regard $V$ as a concrete category whose morphisms are algebraic homomorphisms (preserving the operations).

Let $F: V\to Pries$ be the functor that takes $L$ to its Priestly space, with the image of morphisms defined by $F(h(P))=h^{-1}(P)$. 
This is an adjoint situation, 
with its inverse the contravariant functor which assigns to 
to a Priestly space the lattice of clopen downward sets and images of morphisms are given via $\Delta(f)(U)= f^{-1}(U)$.

In algebraic logic quantifier logics, like first order logic and other variants therefore, quantifiers are treated as connectives. 
This has a modal formalism, as well, 
which views quantifies (and their duals) as boxes and diamonds, that is, as a multi -dimensional modal logic.
The algebraic framework of such muti-dimensional modal logics, or briefly multi-modal logics
when their propositional part is classical, is the notion of a Boolean algebra with operators $(BAOs)$.

We now deal with much weaker algebraic structures, namely, 
bounded distributive lattices with operators (reflecting quantifiers), denoted by $BLO$s.
This notion covers a plathora of logics starting from many valued logic, 
fuzzy logic, intuitionistic logic, multi-modal logic, 
different versions (like extensions and reducts) of first order logic.

\begin{example}

Let $\L$ be the  predicate language for $BL$ algebras, $\Fm$ denotes the set of $L$ formulas, and $\Sn$ denotes the 
set of all sentences (formulas with no free variables). 
This for example includes $MV$ algebras; that are, in turn, algebraisations of many 
valued logics.
Let $X_T$ be the Zarski (equivalently the Priestly) topology on $\Sn/T$ based on $\{\Delta\in Spec(\Sn): a\notin \Delta\}$. 
Let $\delta T=\bigcup _{\Delta\in X_T}\{\Delta\}\times \Fm_{\Delta}$. 
Then again, we have $(X_T, \delta T)$ is a {\it sheaf}, and its dual consisting of the continuous sections with operations defined pointwise, 
$\Gamma(T,\Delta)$ is actually isomorphic to $\Fm_T$. 
\end{example}

\begin{definition}

A $BLO$ is an algebra of the form $(L, f_i)_{i\in I}$ where $L$ is a distributive bounded lattice, $I$ is a set (could be infinite) 
and the $f_i$'s are unary operators that preserve order, and joins, and are idempotent $f_if_i(x)=f_i(x)$,  
on $L$, such that $f_i(0)=0$, $f_1(1)=1,$ 
and if $x\in L$, and $\Delta x=\{i\in I: f_i(x)\neq x\}$, then $\Delta( x\lor y)\subseteq \Delta x\cup \Delta y$ 
and same for meets. 
\end{definition}

\begin{definition} Let $\A=(L, f_i)_{i\in I}$ be a $BLO$. Then a subset $I$ of $\A$ is an ideal of $\A$, if $I$ is an ideal of $L$ 
and for all $i\in I$, and all $x\in L$, if $x\in I$, then $f_i(x)\in L$
\end{definition}
What distinguishes the algebraic treatment of logics corresponding to such $BLO$s, is their propositional part; 
it can be a $BL$ algebra, an $MV$ algebra, a Heyting algebra, a Boolean algebra 
and so forth.

Now our desired end, is to represent such structures 
as the continuous sections of sheaves;  the representation in this geometric will be implemented by a contravariant functor.

\section{Duality}

Let us formalize the above concrete examples in an abstract more general setting, that allows further applications.

Let $\A$ be a bounded distributive lattice with extra operations $(f_i: i\in I)$. $\Zd\A$ denotes the distributive 
bounded lattice $\Zd\A=\{x\in \A: f_ix=x,\  \forall i\in I\}$, where the operations are the natural restrictions.(Idempotency of the $f_i$s guarantees 
that this is well defined). 
If $\A$ is a locally finite  algebra of formulas of first order logic 
or predicate modal logic or intiutionistic logic, or any predicate logic where the $f_i$s are interpreted as the existential 
quantifiers, then $\Zd\A$ is the Boolean 
algebra of sentences.

Let $\K$ be class of bounded distributive lattices with extra operations $(f_i: i\in I)$.
We describe a functor that associates to each $\A\in \K$, and $J\subseteq I$, a pair of topological spaces
$(X(\A,J), \delta(\A))=\A^d$, where $\delta(\A)$ has an algebraic structure, as well; in fact it is a subdirect product
of distributive lattices.

This pair is  called the dual space of $\A$.
For $J\subseteq I$, let $\Nr_J\A=\{x\in A: f_ix=x \forall i\notin j\}$, with operations $f_i: i\in J$.
$X(\A,J)$ is the usual dual space of $\Nr_J\A$, that is, the set of all prime ideals of the lattice $\Nr_J\A$, 
this becomes a Priestly space (compact, Hausdorff and totally disconnected), when we take the collection of all sets
$N_a=\{x\in X(\A,J): a\notin x\}$, and their complements, as a base for the topology. 

For a subset $Y$ of an algebra $\A$ we let $\Co^{\A} Y$ denote the congruence relation generated by $Y$ (in the universal algebraic sense).
This is defined as the intersection of all congruence relations that have $Y$ as an equivalence class.
Now we turn to defining the second component; this is more involved. 
For $x\in X(\A,J)$, let $\G_x=\A/\Co^{\A}x$  and 
$\delta(\A)=\bigcup\{\G_x: x\in X(\A)\}.$
This is clearly a disjoint union, and hence 
it can also be looked upon as the following product $\prod_{x\in \A} \G_x$ of algebras. 
This is not semi-simple, because $x$ is only prime,
least maximal in $\Nr_J\A$, and even if it was, there is no guarantee that the congruence it generates in the big algebra is maximal. 
But, when it is, that is when $\prod_{x\in \A}\G_x$ is semi-simple case 
will deserve special attention.

The projection $\pi:\delta(\A)\to X(\A)$ is defined for $s\in \G_x$ by $\pi(s)=x$. Here $\G_x=\pi^{-1}x$ is the stalk over $x$. For $a\in A$, 
we define a function
$\sigma_a: X(\A)\to \delta(\A)$ by $\sigma_a(x)=a/\Ig^{\A}x\in \G_x$. 

Now we define the topology on 
$\delta(\A)$. It is the smallest topology  for which all these functions are open, so $\delta(\A)$ 
has both an algebraic structure and a topological one, and they are compatible.

We can turn the glass around. Having such a space we associate a bounded distributive lattice in $\K$.
Let $\pi:\G\to X$ denote the projection associated with the space $(X,\G)$, built on $\A$.
A function  $\sigma:X\to \G$ is a section of $(X,\G)$ if $\pi\circ \sigma$ is the identity 
on $X$. 

Dually, the inverse construction  uses the sectional functor.
The set $\Gamma(X,\G)$ of all continuous sections of 
$(X,\G)$ becomes a $BLO$ by defining the operations pointwise, recall that $\G=\prod \G_x$ is a product of bounded distributive lattices.

The mapping $\eta:\A\to \Gamma(X(\A,J), \delta(\A))$ defined by $\eta(a)=\sigma_a$ 
is as easily checked  an isomorphism. 

To complete the definition of the contravariant functor we need to define the dual of morphisms. 
These are  natural transformations corresponding to the defining functors of the sheaves, but more concretely:

Given two spaces $(Y,\G)$ and $(X,\L)$ a sheaf morphism $H:(Y,\G)\to (X,\L)$ is a pair $(\lambda,\mu)$ where $\lambda:Y\to X$ is a continous map
and $\mu$ is a continous map $Y+_{\lambda} \L\to \G$ such that $\mu_y=\mu(y,-)$ is a homomorphism of $\L_{\lambda(y)}$ into $\G_y$.
We consider $Y+_{\lambda} \L=\{(y,t)\in Y\times \L:\lambda(y)=\pi(t)\}$ as a subspace of $Y\times \L$.
That is, it inherits its topology from the product topology on $Y\times \L$.

A sheaf morphism $(\lambda,\mu)=H:(Y,\G)\to (X,\L)$ produces a homomorphism of lattices
$\Gamma(H):\Gamma(X,\L)\to \Gamma(Y,\G)$ the natural way:
for $\sigma\in \Gamma(X,\L)$ define $\Gamma(H)\sigma$ by $(\Gamma(H)\sigma)(y)=\mu(y, \sigma(\lambda y))$ for all $y\in Y$.
A sheaf morphism $h^d:\B^d\to \A^d$ can also be asociated with a homomorphism $h:\A\to \B$. 
Define $h^d=(h^*, h^o)$ where for $y\in X(\B)$, $h^*(y)=h^{-1}\cap Zd\A$ and for $y\in X(\B)$ and $a\in A$
$$h^0(h, a/\Ig^{\A}h^*(y))=h(a)/\Ig^{\B}y.$$

This is indeed a generalization of the Stone duality.
Indeed, given a Boolean algebra $\A$, then in this case the stalks are just the $2$ element Boolean algebra, 
and the dual space is just $Hom_{Bool}(X,2)$ the Stone space (here the topology is the subspace topology of the Cantor set), 
and  the double dual,  are the continuous functions from the Stone space  to the two element discrete space, with operations 
defined pointwise, namely 
$Hom_{top}(Hom_{bool}(X,2),2),$ where the last is the  $2$ element discrete space
and the first is the $2$ element Boolean algebra. 
Both are co-separators in their category,
and the existence of a coseparator $C$ defines a natural 
isomorphism via the contravariant Hom functor $Hom(-,C)$, applied twice.This is naturally isomorphic to $A$.
(Other similar contexts are the duality between $C$ star algebras and Compact Hausdorf spaces (via the Gelfund Hom functor; here 
$\mathbb{C}$ is the co separator), 
and the category of  abelian groups and locally  compact abelian groups, via the Pontreyagen 
Hom functor; here $\mathbb{R}/{\mathbb{Z}}$ is the co 
separator.)
We can also formulate the above natural isomorphism, in terms of $Hom$ functors, namey.

For a variety $V$ of $BLO$'s we denote by ${\mathfrak rd}V$, the class ob obtained by discarding the extra operators.
\begin{theorem} Let $V$ be an algebraic category whose objects have a distributive lattice structure. Assume that there exists $C\in V$,
such that the contravariant hom functor $Hom(-, C)$ is an equivalence between ${\mathfrak Rd}V$ and $Priest({\mathfrak Rd}V)$.
Let $\A\in V$, let $\Zd\A$ be its zero-dimensional part, and $X$ be the Priestly space of $Zd\A$. Let 
$\delta$ be the disjoint union of stalks with the topology as defined above, that is $\delta=\bigcup_{x\in X} \G_{x}$, with $\G_x=\A/\Ig^{\A}(x)$,
with topology  induced by the maps $\sigma_a: X\to \delta$ via $x\mapsto a/\Ig^{\A}x$, for every $a\in \Zd\A$.
Then $$\A\cong Hom_{top}(Hom(\Zd(\A), 2), \bigcup_{x\in X} \G_x),$$ and the isomorphism is natural.
\end{theorem}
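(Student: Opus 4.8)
The plan is to assemble the claimed natural isomorphism from two facts already available in the excerpt: the sheaf representation $\eta:\A\to\Gamma(X(\A,J),\delta(\A))$, $\eta(a)=\sigma_a$, which was noted above to be an isomorphism of $BLO$s — here we take $J=\emptyset$, so that $\Nr_J\A=\Zd\A$ — and the assumed Priestley duality $Hom(-,C)$ between ${\mathfrak Rd}V$ and $Priest({\mathfrak Rd}V)$, applied to the object $\Zd\A\in{\mathfrak Rd}V$. Thus the real content is only to identify the carrier $X(\A)$ of the first component with the space $Hom(\Zd\A,2)$ coming out of the Priestley functor, and then to read the set of continuous sections of $\delta$ as a hom-set in $\mathbf{Top}$; naturality is a short diagram chase.

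First I would make the identification $X(\A,\emptyset)\cong Hom(\Zd\A,2)$ precise. By definition $X(\A,\emptyset)$ is the set of prime ideals of the lattice $\Nr_\emptyset\A=\Zd\A$, topologized by the subbasis $\{N_a,\ X\setminus N_a:a\in\Zd\A\}$ with $N_a=\{x:a\notin x\}$; this is exactly the Priestley space that $Hom(-,C)$ assigns to $\Zd\A$ — with $C=2$ on the distributive-lattice reduct — via the standard bijection between prime ideals of a bounded distributive lattice and its homomorphisms into $2$ (a prime ideal being the preimage of $0$ of the corresponding homomorphism) and the matching of the two topologies. Composing this with the sheaf representation gives $\A\cong\Gamma(Hom(\Zd\A,2),\ \bigcup_{x\in X}\G_{x})$, the isomorphism being $a\mapsto\sigma_a$ with operations computed pointwise on the target; this is where the $BLO$ axioms — preservation of order and of joins, idempotency, and the $\Delta$-support condition — are consumed, since they are what make the pointwise operations well defined on $\Gamma$ and make $\eta$ a $BLO$-homomorphism. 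Finally, a continuous section of $\delta$ is by definition a continuous map $\sigma:X\to\delta$ with $\pi\circ\sigma=\mathrm{id}_X$, i.e. a morphism $\mathrm{id}_X\to\pi$ in the slice category $\mathbf{Top}/X$; I read $Hom_{top}(Hom(\Zd\A,2),\ \bigcup_{x}\G_{x})$ as precisely this hom-set of continuous sections, which is forced if the notation is to extend the Stone-case formula $Hom_{top}(Hom_{bool}(X,2),2)$, where one-point fibres make ``section'' and ``arbitrary continuous map'' coincide. With this reading the displayed isomorphism of the theorem is exactly the composite just constructed.

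For naturality, let $h:\A\to\B$ be a morphism in $V$. Since $h$ preserves the $f_i$ it restricts to a lattice homomorphism $\Zd h:\Zd\A\to\Zd\B$, and the dual sheaf morphism $h^d=(h^*,h^\circ):\B^d\to\A^d$ recorded above has first component $h^*=Hom(\Zd h,2)$ under the identification of the previous paragraph. A direct computation using $(\Gamma(h^d)\sigma)(y)=h^\circ(y,\sigma(h^* y))$ together with $h^\circ(y,a/\Ig^{\A}h^*(y))=h(a)/\Ig^{\B}y$ yields $\Gamma(h^d)(\sigma^{\A}_a)=\sigma^{\B}_{h(a)}$, i.e. $\Gamma(h^d)\circ\eta_\A=\eta_\B\circ h$; so $\eta$ is a natural transformation, and being objectwise an isomorphism it is a natural isomorphism. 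Transported through the identification, $\Gamma(h^d)$ is simply ``precompose with $Hom(\Zd h,2)$, then postcompose fibrewise with $h^\circ$'', so the isomorphism of the theorem is natural in $\A$.

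I expect the one genuinely non-formal point to be the surjectivity of $\eta$, hidden under the ``as easily checked'' above: that every continuous section of $\delta$ equals $\sigma_a$ for some $a\in\A$. This needs a patching argument — a continuous section is locally of the form $\sigma_a$ on each basic open $N_a$ of $X$, and one then uses compactness of the Priestley space $X$ together with the $BLO$-ideal structure (so that the required glueings take place controllably inside $\Zd\A$ and lift to $\A$) to splice finitely many local representatives into one global element $a$. Injectivity is comparatively soft: if $a\neq b$, say $a\not\le b$, the prime ideal theorem for bounded distributive lattices — available precisely because $Hom(-,2)$ is an equivalence on ${\mathfrak Rd}V$ — together with the $BLO$ axioms produces a prime ideal $x$ of $\Zd\A$ with $a/\Ig^{\A}x\neq b/\Ig^{\A}x$. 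The remaining task, matching the $\sigma_a$-generated topology on $\delta$ with exactly what continuity of sections requires, is routine but fiddly.
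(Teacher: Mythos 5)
Your proposal is correct and follows essentially the same route as the paper, which offers no separate argument for this theorem beyond the preceding construction of the dual space and the assertion that $\eta(a)=\sigma_a$ is ``as easily checked'' an isomorphism; you make that route explicit by identifying $X(\A,\emptyset)$ with $Hom(\Zd\A,2)$ via the prime-ideal/homomorphism correspondence, reading the outer $Hom_{top}$ as the set of continuous sections, and verifying naturality against the dual morphism $h^d=(h^*,h^{\circ})$ exactly as the paper defines it. You also correctly isolate the surjectivity (patching via compactness) and injectivity (separation by stalks) of $\eta$ as the only substantive points, which the paper leaves unverified.
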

\begin{definition} Let $\A\in \CA_{\omega}$ and $x\in A$. The dimension set of $x$, in symbols $\Delta x$, 
is the set $\{i\in \omega: c_ix\neq x\}.$ Let $n\in \omega$. 
Then the $n$ neat reduct of $\A$ is the cylindric algebra of dimension $n$ 
consisting only of $n$ dimensional elements (those elements such that $\Delta x\subseteq n)$,
 and with operations indexed up to $n$.
\end{definition}

\begin{example}

\begin{enumarab}

\item Let $\A\in \Nr_n\CA_{\omega}$. Then there is a sheaf ${\bf X} =(X, \delta, \pi)$ such that $\A$ is isomorphic to continuous sections
$\Gamma(X;\delta)$ of $\bold X.$  Indeed, let $X(\A)$ be the Stone space of $\Zd\A$. Then for any maximal ideal 
$x$ in $\Zd\A$, $\Ig^{\A}(x)$ is maximal in $\Nr_n\A$.
Let $\delta(A)=\bigcup \G_x$, where $\G_x=\A/\Ig^{\A}x$. 
The projection $\pi:\delta(\A)\to X(\A)$ is defined 
for $s\in \G_x$ by $\pi(s)=x$. For $a\in A$, 
we define a function $\sigma_a: X(\A)\to \delta(\A)$ by $\sigma_a(x)=a/\Ig^{\A}x\in \G_x$. 
Then $\pi\circ  \sigma$ is the identity and $\delta(\A)$ has  the smallest topology such that these maps are continuous.
Then $\eta: \A\to \Gamma(X(\A)), \delta)$ defined by
$\eta(a)=\sigma_a$ is the desired isomorphism.

\begin{example}

Let $\Sn_{\L_n}$ denote the set of all $\L_n$ sentences, and  fix an enumeration $(c_i: i<n)$ of the constant symbols.
We assume that $T\subseteq  \Sn_{\L_0}$. 
Let $X_T=\{\Delta\subseteq \Sn_{\L_0}: \Delta \text{ is complete }\}$.
This is simply the underlying set of the Priestly  space, equivalently  the Stone space, 
of the Boolean algebra $\Sn_{L_0}/T$. For each $\Delta\in X_T,$ let $\Sn_{\L_n}/{\Delta}$
be the corresponding Tarski-Lindenbaum quotient algebra, which is a (representable) cylindric algebra of dimension $n$. 
The $i$th cylindrifier $c_i$ is defined by 
$c_i\phi/{\Delta}=\exists \phi(c_i|x)$, where the latter is the formula
obtained by replacing the $i$th constant if present by the first variable $x$ not occurring in $\phi$, 
and then applying the existential quantifier $\exists x$.
Let $\delta T$ be the following disjoint union
$\bigcup_{\Delta\in X_T}\{\Delta\}\times Sn_{\L_n}/{\Delta}.$ 
Define the following topologies, on $X_T$ and $\delta T$, respectively.
On $X_{T}$ the Priestly (Stone) topology, and on $\delta_{\Gamma}$ 
the topology with base $B_{\psi,\phi}=\{\Delta, [\phi]_{\Delta}, \psi\in \Delta, \Delta\in \Delta_{\Gamma}\}.$
Then $(X_T, \delta T)$ is a {\it sheaf}, and its dual consisting of the continuous sections, 
$\Gamma(T,\Delta)$, with operations defined pointwise, is actually isomorphic to $\Sn_{\L_n}/T$. 

\end{example}

\item Let $\A\in \Nr_n\CA_{\omega}$. For any ultrafilters 
$\mu$ and $\Gamma$ in $\Zd\A$, the map $\lambda:\A/\mu\to \A/\Gamma$ defined via, 
$a/\mu\mapsto a/\Gamma$
maps $\Zd\A$ into $\Zd\A$. (The latter is the set of zero-dimensional elements). 
The dual morphism is $\lambda^d=(\lambda, \lambda^0) :(X_{\Gamma}, \delta(\Gamma))\to (X_{\mu}, \delta(\mu))$, 
is defined by $\lambda(\Delta)=\Delta$ and $\lambda^0(\Delta, (\Delta), a/{\Delta}))=(\Delta, a/\Delta)$.
Thus it is an isompphism from $(X_{\Gamma}, \delta(\Gamma)$ onto the restriction of 
$(X_{\mu}, \delta(\mu))$ to the closed set $ X_{\Gamma}$.
Conversley, every restriction of $(X_{\mu}, \delta(\mu))$ to a closed subset $Y$ of $X_{\mu}$ is up to isomorphism the dual space of 
$\Nr_n\A/F$ for a filter $F$ of $\Zd\A$. For if $\Gamma=\bigcap Y$, then $Y=X_{\Gamma}$ 
since $Y$ is closed and the dual space of $\Nr_n\A/{\Gamma}$ is isomorphic to 
$(Y, \delta(\mu)\upharpoonright Y)$.
\end{enumarab}
\end{example}


\section{Forcing in topoi}

{ Here we generalize results from topoi of sheaves to quasi topoi of pre shaeves, in the context of forcing
and many sorted theories of the higher order partial element logic.}

Topoi first appeared in algebraic geometry, where they are abundant via the generalisation of a sheaf over a topological space. 
Categorially they they are a sheaves over a site.  Let ${\sf C}$ be a category whose members have a 
distributive lattice reduct.
that ${\sf C}$ is an algebraic 
category. Let $H$ be an algebra having a lattice reduct. The one can form the universe of sets based on $H$ and $V$ the usual way, by taking at step $\alpha$ 
the set of all functions for $V_{\beta}^H$ to $2$. The question is can we capture such a notion of generic extensions using 
a forcing relation between elements in $H$ and formulas in the logic, 
so that $p$ forces $\phi$ iff for every generic filter $G$ of $H$, with $p\in G,$
we have $V[G]\models \phi^{G}$ where the $G$ interpretation of an $H$ term is defined the usual way.  
That is is $a^G=\{y: \exists p\in P:  (p,y)\in a\}$.

The Heyting  example we did forcing with respect to the topology have open sets $\O_p$, 
where $p\in P$. The set $\{\O_ p\in P\}$ forms a Heyting algebra, and is 
an instance of a site.

Now we show that our functor on a class $\K$ can be seen as a topoi, more precisely as functor from a site into 
a category of sheaves. In this case, we can naturally define a notion of forcing.

The connection between logic and topos, or rather elementary topos, is somewhat deep. 
Topos is a categarial reflection of Set, as much as abelian categories are catgorial reflection of abelian groups.
So different forms of forcing, like Cohen's, Robinson's and Kripke, can be seen as {\it forcing with sheaves on different sites.} 
Another rich source is the interpretation of higher order languages, via Henkin semantics as many sorted languages, 
viewed as type theory or logics of partial elements in topos.
Here we have a {\it completeness theorem}, indicating that topoi are {\it just the right abstraction}. 
We will discuss both related subjects via topoi, or categories of pre-sheaves.

The connections of topos to sheaves is well known. 
What concerns us here is logical topoi, which are strongly related to forcing in set theory; 
and here the connection between sheaves and topos is explict.
Dana Scott envisaged that forcing can be based on intuitionistic logic, 
by noting that the forcing consition for negation is actually an intiutinistic forcing.
Intuitionistic logic correspond to Heyting algebras, and we have already defined the comulative heirarchy of sets
corresponding to pre-sheaves on a Heyting 
algebra.

Our next theorem which is an abstraction of forcing using topoi instead of sets, relies heaviy on some categorial concepts.
We will show that given a small category that happens to be a site, then one can look at this category as
forcing conditions, and the  ramified language as its Yoneda image. 

Indeed, a locally small category $C$ (where $Hom(A,-)$ are sets, embeds fully and faithfully 
into the category of set-valued pre-sheaves via the Yoneda embdding. The presheaf 
category is (up to equivalence of categories) the free co-limit  completion of the category $C$. 

This is indeed a generalization of Boolean and Heyting forcing. For the latter we started with a Heyting algebra $H$, 
$Set_H$, the perturbed universe of sets by formulas taking value in $H$, {\it is} 
its the reflective subcategory of its {\it Yoneda image}, namely, it is the topos of sheaves on $H$.

The Yoneda lemma says that instead of studying the locally small (that is hom-sets are sets) $C$, one should study the 
category of all functors of ${\sf C}$ into Set, that is the presheaves on ${\sf C}.$ 
Each object $A$ of $C$ gives rise a hom functor  $h^A=Hom(A,-)$. 
For an arbitray functor $F$ from $C$ to set, for each object $A$ of $C,$ the
natural transformations from $h^A$ to $F$ are in one to one correspondence with the elements of $F(A)$, 
that is $Nat(h^A, F)\cong F(A)$, and contravariantly 
$Nat(h^A,G)\cong G(A)$.  Furthermore, this isomorphism is natural in $A$ and $F$, when both sides are regarded as functors 
from $^Cset\times C$ to $Set$.

\begin{theorem} Assume that $C$ is an algebraic site. 
Then our $\bold F$ can be used to define a notion of forcing; 
the forcing conditions will be members of the category.
\end{theorem}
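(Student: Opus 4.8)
The plan is to unwind the definitions and show that an "algebraic site" already carries, packaged in its categorical data, exactly the ingredients needed to run a forcing construction, and then to point to the functor $\bold F$ of the previous sections as the machine that realizes this. First I would recall what the hypothesis gives us: $C$ is a site in the sense of the definition above, so it is a cartesian (in particular finitely complete) category equipped with, for each object $A$, a nonempty class $Loc(A)$ of covering families stable under pullback; and it is \emph{algebraic}, meaning the objects carry a distributive lattice reduct compatible with the morphisms. The first step is to observe that the objects of $C$ can literally be taken as the poset $P$ of forcing conditions: the order is induced by the existence of morphisms $A_i\to A$ (covers and, more generally, maps into $A$ act like refinements/strengthenings), and the lattice reduct supplies the meet and join needed to talk about $p\wedge q$ and $p\vee q$ among conditions. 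I would make explicit the dictionary: "$p$ refines $q$" $\leftrightarrow$ "there is a morphism $p\to q$ in $C$"; "a maximal antichain below $p$" $\leftrightarrow$ "a family in $Loc(p)$"; compatibility of conditions $\leftrightarrow$ nonemptiness of the relevant pullback.

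Next I would produce the pre-sheaf category and the forcing language. By the Yoneda discussion already in the excerpt, $C$ embeds fully and faithfully into $[C^{op},\mathbf{Set}]$ via $A\mapsto h^A=Hom(A,-)$ (using that $C$ is locally small, which is part of being a small, hence locally small, category), and the sheaf subcategory $Sh(C)$ — cut out by the two localization conditions (i) and (ii) of the sheaf definition above — is a (Grothendieck) topos by the definition we adopted. The Yoneda image of $C$ inside this topos plays the role of the \emph{ramified language}: each representable $h^A$ is a "name" parametrized by the condition $A$, and the topos $Sh(C)$ plays the role of the generic extension $V[G]$, with the subobject classifier $\Omega$ of $Sh(C)$ giving the Heyting-valued truth values, exactly as in the Heyting-forcing example $Set_H$ earlier (there $C=H$ and $Set_H=Sh(H)$). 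The forcing relation $p\Vdash\varphi$ is then \emph{defined} by the internal logic of the topos: $p\Vdash\varphi$ iff $p$, viewed via $h^p\hookrightarrow 1$... more precisely iff the composite $h^p\to 1\xrightarrow{\ulcorner\varphi\urcorner}\Omega$ factors through $true$, which by the Kripke–Joyal / sheaf semantics unwinds to the usual clauses — and crucially the clause for negation and implication is the intuitionistic one, matching Scott's observation quoted in the text. One then checks routinely that this $\Vdash$ satisfies the expected monotonicity ($p\Vdash\varphi$ and $q\to p$ imply $q\Vdash\varphi$) and the "local character" axiom ($p$ is covered by $(p_i)\in Loc(p)$ and each $p_i\Vdash\varphi$ imply $p\Vdash\varphi$), the latter being precisely the sheaf condition (ii).

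Then I would connect this to the functor $\bold F$ of the paper. The point is that $\bold F$, which sends an algebra $\A\in\K$ to the pair $(X(\A,J),\delta(\A))$ and, dually, sends a space to its lattice of continuous sections, is — when the source is an algebraic site $C$ — a functor from $C$ into a category of sheaves, i.e. exactly a topos in the sense of our broad definition (``a functor category of sheaves''). So reading $\bold F$ through the Yoneda/sheaf-semantics lens above, the objects of $C$ are the forcing conditions and $\bold F$ describes how data glued over covers of a condition lift to a global section — which is the set-theoretic content of "$p$ forces $\varphi$" being witnessed consistently across a generic filter. I would close the argument by noting that this genuinely generalizes Boolean and Heyting forcing: taking $C$ to be a complete Boolean algebra of regular open sets recovers Cohen forcing, taking $C$ a complete Heyting algebra of open sets recovers Scott–Kripke intuitionistic forcing, and $Set_H$ is recovered as the reflective subcategory $Sh(H)$ of the Yoneda image of $H$, as asserted in the text.

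The main obstacle I expect is \emph{checking that the localization data $Loc(A)$ genuinely behaves like a forcing notion}, i.e. that the pullback-stability of covers translates into the density/compatibility bookkeeping that forcing requires — in particular that one can refine and amalgamate conditions coherently — and, relatedly, making the "algebraic" hypothesis pull its weight so that the lattice operations on objects are compatible enough with the covering families that the internal logic of $Sh(C)$ is the \emph{right} logic (Heyting in general, Boolean when the reduct is). A secondary delicate point is smallness: the Yoneda embedding and the existence of the subobject classifier need $C$ small (or at least that $Sh(C)$ be a genuine Grothendieck topos), so I would want to either assume $C$ small outright — as the statement does — or verify a local-smallness-plus-smallness-of-covers condition; this is exactly why the theorem hypothesizes a \emph{small} site. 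Once these are in hand, the definition of the forcing relation and the verification of its basic properties are formal consequences of Yoneda and the sheaf condition, so the substance of the proof is really the translation in the first two steps.
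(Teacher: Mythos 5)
Your overall strategy coincides with the paper's: both proofs run the Yoneda embedding $y:C\to\bar{C}=[C^{op},{\sf Set}]$ and then read off the forcing relation from the Kripke--Joyal semantics, with objects of $C$ as conditions; your closing observations (recovering Boolean and Heyting forcing, the intuitionistic clause for implication, the smallness caveat) all match remarks the paper makes around the proof. The genuine divergence is in how the \emph{ramified language} is produced, and here your version has a concrete gap. The paper does not take the representables $h^A$ as names; it constructs the cumulative hierarchy of names explicitly by transfinite recursion on power objects, $P(A)(I)=Sub_{\bar{C}}(y(I)\times A)$, setting $V^{\bar{C}}=\bigcup_{\alpha\in On}V^{\bar{C}}_\alpha$ and declaring $a\in V^{\bar{C}}(I)$ iff $a$ is a sub-presheaf of $y(I)\times V^{\bar{C}}$. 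That construction is what supplies the atomic clauses $I\models a\in b$ iff $(Id_I,a)\in b$ and $I\models a=b$, together with the restriction operation $a.u$ needed in the clauses for $\to$ and $\forall$. Your proposal identifies the ramified language with the Yoneda image itself and then appeals to factorization of $h^p\to 1\to\Omega$ through $true$; that gives a forcing relation for the \emph{internal language} of $Sh(C)$, but it does not by itself yield a class of set-theoretic names over which the atomic membership and equality clauses can be stated, which is the substance of ``defining a notion of forcing'' for set theory here. The conditions ($h^A$) and the names (sub-presheaves of $y(I)\times V^{\bar{C}}$) are different objects and should not be conflated.

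Two smaller discrepancies: the paper carries out the construction in the presheaf topos $\bar{C}$ and only passes to $Sh(\bar{C},J)$ via sheafification at the end (precisely to handle the non-small case, where colimits of transfinite chains are union followed by sheafification and the reflection maps $\eta_X$ need not be inclusions), whereas you work in $Sh(C)$ throughout; and the theorem as stated hypothesizes only an ``algebraic site,'' not a small one, so the smallness issue you raise is handled in the paper by the quotient construction $e:V^{C}\to V^{E}$ rather than by assumption. Neither of these is fatal, but the missing transfinite construction of $V^{\bar{C}}$ is the step you would need to add for the argument to establish what the paper's proof establishes.
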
 
\begin{proof}
(In our first example $C$ was only a partially ordered set). Let $\bar{C}$ be the set of all functors from $C^{op}$ to $Set$. 
Write $y:C\to \bar{C}$ to be the Yoneda embedding. The commulative heirarchy of sets will be formed in $\bar{C}$, elements of the site
$C$ will be the forcing conditions, and a $C$-term will be represented by a functor from $C$ to ${\sf Set}$, that is an element
in $\bar{C}$. So intuitively $C$ is the forcing notion 
and $\bar{C}$ is the ramified language. 

$0$ is the initial object in $C$, and we have the operation $P(A)$ on objects abstracting the operation of power set.
The maps $i_{\alpha,\beta}:P^{\alpha}(0)\to \wp^{\beta}(0)$, for all ordinals $\alpha, \beta$ are all inclusions, 
therefore we may construct $colimit_{\alpha\in On}P^{\alpha}(0)$ as the union 
$$V^{(\bar{C})}=\bigcup_{\alpha\in On}V_{\alpha}^{\bar{C}}.$$
Using the fact that in $\bar{C}$ power objects are constructed for $A\in \bar{C}$ as 
$$P(A)(I)=Sub_{\bar{C}}(y(I)\times A),$$ 
where $Sub$ denotes subobjects and $I\in C,$
one may  consider $V^{\bar{C}}$ as defined inductively by the rules
$$a\in V^{\bar{C}}(I)\text { iff $a$ is a set valued sub-presheaf (sub-functor) of } y(I)\times V^{\bar{C}}$$
for objects $I$ of $C$, this step corresponds to the inductive step in usual set theory, when we obtain 
$V_{\alpha+1}$ from $V_{\alpha}$. 

If $a\in V^{\bar{C}}(I)$ and $u:I\to J$ is a morphism in $C,$ then we write
$a.u$ for the subobject of $y(J)\times V^{(\bar C)}$ where $(v,c)\in a.u$ iff $(uv, c)\in a$. 
This allows one to interpret  the membership relation as:
$$\epsilon\mapsto V^{\bar {C}}\times P(V^{(\bar{C}}).$$ 
Equality is defined as usual in presheaves. 
The Kripke-Joyal semantics of this interpretation in pre-sheaves over $\bar{C}$ give rise to 
the following forcing clauses, where $I\in C$, and $a,b\in V^{\bar{C}}$; 
and the language is the usual first order language for set theory:

$$I\models a\in b \Longleftrightarrow (Id_I,a)\in b$$
$$I\models a=b \Longleftrightarrow a=b$$
$$I\models \phi\land \psi (c)\Longleftrightarrow  I\models \phi(c) \text { and } I\models \psi(c)$$
$$I\models \phi\to \psi(c)  \text { iff for all $u:J\to I$ from $J\models \phi(c.u)$ it follows that $J\models \psi(c.u)$}$$
$$I\models \phi\lor \psi(c) \Longleftrightarrow J\models \phi(c)\text { or } I\models \psi(c)$$
$$I\models \forall x\phi(x,c)\Longleftrightarrow J\models \phi(a, c.u), \forall u:J\to I, a\in V^{\bar{C}}(J)$$
$$I\models \exists x\phi(x.c)\Longleftrightarrow I\models \phi(a,c).$$
Note that when $C$ is just a Heyting algebra then ${\cal C}$ is the set of all presheaves, that is functions from $C$ to $Set$.
And the above construction gives exactly intuitionistic forcing.
This is an interpretation of set theory in the 
sheaf topos $E=Sh(\bar{C}, J)$ where $J$ is a Grothendieck topology on small category $C$. 

If $C$ is not small, one can also interpret set theory in the same Sheaf topoi, with some modifications
The main obstacle here is that colimits of transfinite chains of inclusions are not simply union but 
rather union followed by sheafification $a: \bar{C}\to Sh(C, J)$ that is a left 
adjoint to $i: Sh(\C,J)\to \bar{C}$ and that the reflection maps $\eta_X:X\to a(X)$ in general cannot be 
viewed as inclusions. This can be overcome by obtaining the model for $E$ as a quotient of the model in $\bar{C}$. 
Construct by transfinite recursion a family of morphisms $e_{\alpha}: V_{\alpha}^{C)}\to V_{\alpha}^{E}$ where
$V^{C}$ and $V^{E}$ refer to cumulative heirachies in the Gothendieck toposes $\bar{C}$ and $E$. These $(e_{\alpha})$ 
satisfy $$i_{\alpha,\beta}\circ e_{\alpha}=e_{\beta}\circ i_{\alpha,\beta}$$
and they are all dense w.r.t to the topology and for 
successors they are epics in $\C$. 
Then $e:V^{C}\to V^{E}$ the unique 
mediating arrow between them is an epimorphism.
\end{proof}

\subsection{Fuzzy Forcing}

In recent years, several algebras have appeared in the framework 
of fuzzy logics; some can be readily seen as extensions of intuitionistic logic. All those algebras are based on the notion of residuated lattices.
Examples include $MV$ algebras, $BL$ algebras. 
What distinguishes, these algebras is the presence of two distinct conjunctions, $\land $ and $\otimes$, the latter
the Lukasiewicz conjunction. 

There has been work in interpreting many sorted theories in quasi-topos based on $MV$ algebras, 
but the best that was achieved, to the best of our knowlege, 
is that the interpretation is only faithfully represented in the Heyting algebra obtained from the $MV$ by identifying the two conjunctions.

For the Heyting case, a completeness theorem is obtained for such theories
using the topoi $Set_H$, where $H$ is a sub-object classifier, in the category $E(T)$ of definable terms and total functions in the many sorted 
language. This category which happens to be a topos, is a reflective subactegory of the category of fuzzy sets corresponding to $T$ .

Also, forcing in Heyting valued models is known, it is natural to ask about forcing in 
such new algebras. 
The crucial missing link here, is the interpretation of both conjunctions, particulary, the Lukasiewicz conjunction. In the latter case, 
it is not obvious how to 
define $p$ forces $\phi\otimes\psi$, and in the second case, it is not clear how to interpret the truth values of the $MV$ algebra.

First thing to notice is that the first conjunction has to do with the lattice structure, but the second lends itself to a monoidal structure.
In fact, the class of $MV$ algebras form {\it an abelian category}, which a generalization of {\sf Ab}  and $BL$ 
algebras form a {\it monoidal} category an extension of {\sf Monoid}

There is a topos theory for such algebras, where one can form $Set_L$ similar
to Heyting algebars, in fact one can construct all three equivalent categories, that are the qusi topoi of the comulative heriarchy.

But this process has to do with the first part of forcing constructions namely, perturbing the the universe of sets
via an algebra. The $V_{\beta}$s are just functions from $V_{\alpha}$ to the algebra, and the interaction of functions is defined pontwise.
In translating  the forcing in the ground model, to capture all generic extensions, 
one has to define clauses that involve the non-classical conjunction. 
In usual forcing the inductive clauses are defined by the 
'meta meaning' of the connectives, namely in English,  they are not defined intrinsically; this is the case for both classical forcing and intuitionistic 
forcing. However, what can guide us here is the duality between the bi- functors tensor products and Hom. On the very 
basic level, namely in the algebra, this is expresed by
$a\otimes b\leq c$ iff $a\leq (b\to c).$

It seems that the best way to approach this problem is to work on the abstract level.
Let us start with a $BL$ algebras. Let $\A$ be such an algebra, so besides the lattice structure it has a monodial nature.
So it is natural to declare, that a presheaf $E$ on $L$ assigns to each element $x\in L$ a 
monoid $E(x)$ and whenever $x\leq y$ assigns a morphism 
$E_{xy}:E(y)\to E(x)$, that preserves the monoid structure, meaning that it also preserves $\otimes$. 
Now $E(x,y)$ is an abstraction of equality, it defines the $L$ sets (as in the case of Heyting algebras)
and now  sheaves can be defined as equalizers, this means that the sheaves reflect the real equality, and not the fuzzy one.
Note that in $L$, $\implies $ and $\otimes$ are adjoint. 
We may even go further.

One starts with a site $C$ that has a monoid structure, and one takes $\bar{C}$ to be the category of all functors from
${C^{op}}$ to {\sf Monoid}; this will be the pre-sheaf topos. Let $y:C\to \bar{C}$ be the Yoneda embedding 
and proceed as above, except that one has to define  $P(A)$. 
This is defined by taking those fuzzy subsets of $A$ that admit pullbacks with $A^*$.
A forcing condition, where the new conjunction $\otimes$ that connects
formulas, and this will have to be induced by the monodial structure.

In fact, if you have a monoidal category of sheaves from a Grothendieck site, or simply a site,
into pre-sheaves, then you have a tensor product $\otimes$ 
in both $C$ and $\bar{C}$, the second tensor product is induced by the first in a natural way via the Yoneda Lemma,
and this defines the {\it new } conjunction $\otimes$, as follows 
$$I\models \phi\otimes \psi \text { iff there exists $J$ a monomorphism $u:J\to I$  such that } I\otimes J\models \phi\land \psi.$$
Here $I\otimes J$ is the tensor product of $I$ and $J$.
wWe will give an equivalent definition below.

Finally, Grothendieck topos are sheaves over sites, Girard completely classifies them, 
different sites can give the same topos, these are called Morita equivalent.
This can also happen in the classical case, 
when non isomorphic Boolean algebras, give the same notion of forcing, namely, they give the same extension of the ground 
model.

\subsection{Higher order logics of partial elements, and topoi}

A very much related topic is the logic of partial elements formulated 
as a many sorted theory with higher types. In this context, topoi summarizes in categorial form the essence of higher
order logic.  A complete set of axioms, with respect to semantics represented by topoi, 
can be given in a higher order language. 

We use the logic of partial elements, which is a higher order logic, as worked out in [Handbook]. We introduce an existence predicate $E$, reading 
$E\tau$ as $\tau$ potentially exists.  We intorduce the notion of equality 
of partial elements which presupposes existence in the following sense:
$$\tau=\sigma\to E\tau\land E\sigma.$$ 
We present the logic as many sorted theory with higher types. 
Definition 3.1 is the same, and we take the axioms the same as those on p.1061, replacing the intuitinistic part 
by fuzzy propositional logic. We follow the notation adopted therein.

\begin{definition}A type $A$ is a term of the syntactic form $Iy:[A]. A(\phi\leftrightarrow y(x))$, which we abbreviate by $\{x:A|\phi\}$.
For $A=\{x:A|\phi\}$ we use the notation $\tau\in A$, $\forall x\in A$ $\exists x\in A$ with ther obvious meanings.
\item A definable relation $F:A\to B$ is a closed term of the form:
$$Iz:[A,B].\forall x"A: \forall y:B(z(x,y)\leftrightarrow \phi).$$
\end{definition}
We write $$F'(\tau)\text { for }Iy:B.F(\tau,y).$$
and we define: 
$$\lambda x:A.\sigma \text { for } Iz:[A,B]. \forall x, y[z(x,y)\leftrightarrow y=\sigma].$$

The category $E(T)$ of definable types and definable total functions of $T$ has the definable types of $L(T)$ as objects and 
as morphisms from $A$ to $B$ equivalence classes of definable relations from $A$ to $B$ such that
$$T\vdash \forall x\in A.F'(x)\in B$$ where $F$ and $G$ are equivalent iff
$$T\vdash \forall x\in A.F'(x)\equiv G'(x)$$
and composition  defined by
$$F\circ G=\lambda x. F'G'(x)),$$
(here $\lambda$ is caled $\lambda$ abstraction and it is similar to the $\lambda$ operator in $\Lambda$ calculas)
is a topos and it has sub-object classifier $\Omega$, where $Hom(1, \Omega)$, the set of definable subsets
of $\Omega$ is just the Tarski-Lindenbaum algebra of $T$. 
In analogy to algebraic logic, there is an interplay between the algebraic properties of the power object
$P(A)$ and $Hom(A, \Omega)$, in the category $E(T)$ and logical properties $\Omega$. 

This higher order logic can be {\it interpreted  in any topoi}, and this gives a complete semantics. 
Furthermore, for a higher order many sorted theory $T$ it is 
enough to interpret it in $E(T)$, which is categorially equivalent to $Set_{\Omega}$.  
This is done by given truth values to $\Omega$, making it a Heyting algebra, in analogy
to defining Heyting valued models for set theory. So here a notion of {\it forcing} is implicit, and that is the forcing in the ground model, 
namely, for $p\in \Omega$, $p$ forces $\phi(x)$ iff $M[G]\models \phi(x^G)$, for every generic filter $G$ of $\Omega$.
Here ${x}^G$ is the $G$ interpretation of the $\Omega$-term $x$. 

Recall that $Set_{\Omega}$,  can be viewed as a category of set valued 
pre-sheaves, but  when $\Omega=\O(X)$, the algebra of open sets of a topological space, 
 then $Set_{\Omega}$  is equivalent to only set valued sheaves over the site $X$.
On the other hand, $Set_{\Omega}$ based on the Heyting algebra $\Omega$, establishes 
the tie between type theory, Lambda Calculas, and set theory.

A similar task can be done using $MV$ algebras, giving the category of fuzzy sets. 
Indeed let $\A$ be an $MV$ algebra. In this case,  one can show that the category of definable terms and total functions,
call it  $CAT$, which is a category of fuzzy sets,  can be defined. 

Consider the category of $H$ sets $CAT_H$, $H$ a Heyting algebra  and the category of fuzzy sets $CAT_A$ based on an $MV$ algebra $A$.
We can asume that $CAT_{Hey(A)}\subseteq CAT_A$, by identifying $\otimes$ and $\land$ in the $MV$ algebra on which 
$CAT_H$ has  has a power object $PA$. It is $P1=(Sub(To), )=H$. 

Another category called complete $H$ category can be defined, this is equivalent to $CAT_H$.
If $A$ is an $MV$ algebra, then $Cat_{Hey(A)}$ is refletive subcategory of $CAT_A$.



Consider the category $CAt_H$. For any $H$-set $\A$ there exists a power set of $A$. $P(A)=(Sub(A), \rho)$ where $s\rho t$ iff 
$$\bigvee_{x\in A}(s(x)\leftrightarrow t(x))$$
Then it can be shown that there exists a natural isomorphism $\phi:Hom(-, PA)\to Sub(-,\times A)$. 

The suboject classifier is readily defined by $P1=H$. A singleton is a 
function $s:A\to H$ such that $\forall x,y\in A)(s(x)\land \delta(x,y)\leq s(y)$ and $\forall x, y\in A) (s(x)\land s(y)\leq \delta(x,y)$. 
An $H$ set is complete if for every singleton $s$ there is a unique $a\in A$ such that $s=\{a\}$. 
For any $H$ set there exists an isomorphic complete $H$ set.

Now given $CAT$ based on the $MV$ algebra $A$. Identify $\otimes$ with $\land$, getting a Heyting algebra $H$. 
Take the subategory consisting of complete $H$ sets, with strong morphisms defined as they are in $CAT$. 

Then if $f:\A\to B$ is a morphism in $CAT$ between complete $H$ sets, then its graph $f:\A\times B\to \Omega$ is defined by $[f](x,y)=\rho(f(x,y)$.
Then let $A, B$ be complete sets and let $f:\A\to B$ be a morphism. Then there exists a strong morphism $f^*:A\to B$ such $[f^*]=f$.
$CAT_H$ is equivalent to the sub-category consisting of {\it complete sets}, which is a reflective subcategory of $CAT$. 


Now let us go back to interpretations of many sorted higher order theoreies. 
Dropping the Lukasiewicz connective
$\otimes$, the resuting reduct, call it also $E(T),$ is naturally isomorphic to $Set_H$, 
where $H$ is the Heyting algebra obtained by identifying the Luckasiewick conjunction
with the usual one in $\A$. Furthermore, $E(T)$ is a reflective sub-category of  $CAT$, 
where $CAT$, a category of fuzzy sets (defined by modifiying slightly the definition of 
$Set_L$ for Heyting algebras) is only a quasi-topos.

$CAT$ is a category all the same, it is complete and it has pullbacks, but is not a topos.
In particular, it does not have a sub-object classifier.  

Furthermore, for any sort $A$, one can define $\bar{A}$ as follows:
$$\{x: [A]|\forall y,z:A((x(y)\land x(z))\to (y=z\land y\in A))\}$$
with embedding
$$\mu_A=\lambda x.\{z: A|x=z\}.$$ 
like so that $A$ is a subobject of $\bar{A}$ via the morphism
$\mu_A$.

Now for every object $(Sub(A), \leq)$ is a residuated lattice,  we do not have a uniform subobject classifier, 
though we have an almost all sub-object classifier, of the reflective subcategory $Set_H$. 

Then we can interpret the many sorted fuzzy theories into $CAT$ defining the values of $[\tau]$ 
and $[\phi]$ as products for every term and formula. 
We define $[E\tau]$ as $\mu\circ \tau.$ 

So for every object in the big category embeds into a unique object in the small category, obtained by identifying $\otimes$ with $\land$. 
Now how can we extend the truth values of $H$ to the bigger category, namely to $\A$? 
$H$ defines an implicit notion of intuitionistic forcing and $\A$ defines a quasi-topoi of $\A$ sets.
So here again we are forcing with {\it pre-sheaves on different sites}, namely $H$ and $\A$. 
We have already defined the forcing relation for $p\in A$, and the Lukasiewicz connective $\phi\otimes\psi$, 
all we have to do is lift it to $Set_{\A}$. Another way is, to lowe it to $\A$, namely:

\begin{theorem} Let $A$ be a complete  $MV$ algebra. Let $A^*=A\times A$. 
Then one defines $P(B)$ as the set of all subsets of $B$, that gives pullbacks in the category of fuzzy sets.
Then one can define truth values of $\times$ and $\land$, in $\A^*$, completing the interpretation.
\end{theorem}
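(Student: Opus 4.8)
The "theorem" as written is extraordinarily vague: it asserts that for a complete $MV$ algebra $A$ with $A^* = A\times A$, one defines $P(B)$ as the set of subsets of $B$ giving pullbacks in the category of fuzzy sets, and then "one can define truth values of $\times$ and $\land$ in $A^*$, completing the interpretation." There is no precise assertion of an equality, isomorphism, or universal property to be verified. So my proof proposal must first \emph{fix a precise statement} and then prove that. The natural reading, consistent with the surrounding discussion, is: the map $\otimes$ on $A$, together with the lattice structure, lifts along the diagonal-type embedding $\mu_B : B \hookrightarrow \bar B$ to an operation on the power object $P(B)$ computed inside $A^*$-valued fuzzy sets, and this lifted operation agrees with the forcing clause for $\otimes$ given earlier, so that the interpretation $[\,\cdot\,]$ of every formula and term of the many-sorted fuzzy theory extends to $\mathrm{CAT}$ (equivalently, to $A^*$-fuzzy sets) in a way that restricts to the known Heyting interpretation on the reflective subcategory $Set_H$.

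**Plan.** First I would record the ambient setup assumed from earlier in the paper: $A$ complete $MV$, $H = \mathrm{Hey}(A)$ obtained by identifying $\otimes$ with $\land$, the quasi-topos $\mathrm{CAT}$ of $A$-fuzzy sets, the reflective subcategory $Set_H \simeq$ complete $H$-sets, and the embeddings $\mu_A : A \to \bar A$, $\mu_A = \lambda x.\{z:A \mid x=z\}$. Step one is to make $A^* = A\times A$ carry the structure needed: componentwise lattice operations, and the ``mixed'' operation $(a_1,a_2)\boxtimes(b_1,b_2) = (a_1\land b_1,\, a_2\otimes b_2)$, so that the first coordinate tracks the Heyting (sheaf-level, ``real equality'') content and the second tracks the Lukasiewicz content; residuation on the second coordinate is inherited from $a\otimes b \le c \iff a \le (b\to c)$ in $A$. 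Step two: for an $A$-fuzzy set $B = (B,\delta)$, define $P(B)$ exactly as in the text --- subsets $s \subseteq B$ (equivalently singletons/strict subfunctors) admitting pullbacks with $A^* = A\times A$ in $\mathrm{CAT}$ --- and verify that $P(B)$ is again an object of $\mathrm{CAT}$, with $P(1) = (\mathrm{Sub}(1),\rho)$ recovering the almost-subobject-classifier $H$ on the reflective part, as already established. Step three: transport $\boxtimes$ through the natural isomorphism $\mathrm{Hom}(-, P A) \cong \mathrm{Sub}(-\times A)$ (the displayed iso $\varphi$ from the higher-order section) to get a binary operation on $P(B)$, and check it is well-defined on equivalence classes of definable relations and respects composition $F\circ G = \lambda x. F'(G'(x))$. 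Step four: show the forcing clause
$$I \models \phi\otimes\psi \iff \exists\ J,\ u:J\rightarrowtail I \text{ such that } I\otimes J \models \phi\land\psi$$
computed in the presheaf/monoidal model coincides, under Kripke--Joyal semantics, with evaluating $[\phi\otimes\psi]$ via $\boxtimes$ in $A^*$; this is where one uses that $\otimes$ and $\implies$ are adjoint and that the monoidal tensor on presheaves is the Day convolution induced by the monoid structure on $C$ (here $C = A$ as a monoidal poset), so the Yoneda-transported tensor on $P(B)$ is forced to be $\boxtimes$ on the second coordinate. Finally, conclude that defining $[E\tau] = \mu\circ\tau$, $[\tau\in A]$, $[\phi\land\psi]$, $[\phi\otimes\psi]$ by these $A^*$-valued formulas gives a total interpretation of the many-sorted fuzzy theory in $\mathrm{CAT}$ extending the $Set_H$ one, i.e. ``completing the interpretation.''

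**Main obstacle.** The hard part is \emph{coherence}: showing that the $\boxtimes$-operation transported onto $P(B)$ via $\varphi$ is associative, compatible with the residuation/implication clause, and stable under the restriction maps $a.u$ (so that it descends to $\mathrm{CAT}$ and is preserved by the reflector into $Set_H$). Concretely, one must check that for a monomorphism $u:J\rightarrowtail I$ the forcing clause's existential witness behaves functorially --- that $I\otimes J \models \phi\land\psi$ glues along localizations from $Loc(I)$ --- which is exactly a sheaf-condition argument relative to the site structure on $A$ (Definition of a site: localizations stable under pullback). I expect this to reduce to the adjunction $a\otimes b \le c \iff a\le(b\to c)$ plus completeness of $A$ (so arbitrary joins exist to form the witness $\bigvee_J$), but making the presheaf-level tensor and the algebra-level $\otimes$ literally match requires care: the Day tensor a priori lives in $\bar C = [C^{op},\mathsf{Monoid}]$, and one must verify its value on representables $y(I)\otimes y(J) \cong y(I\otimes J)$ descends to the subobject lattices $\mathrm{Sub}(y(I)\times A)$ defining $P(A)$, which is where the monoid morphism condition on the restriction maps $E_{xy}: E(y)\to E(x)$ (``preserves $\otimes$'') is essential. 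The remaining verifications --- that $A^*$ with $\boxtimes$ is well-defined, that $P(B)\in|\mathrm{CAT}|$, that $\varphi$ is natural --- are routine given the earlier development and I would only sketch them.
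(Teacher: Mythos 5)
The paper offers no proof of this statement at all: the theorem is asserted and the text immediately moves on (``Now let us pause for a minute\ldots''), so there is no argument of the paper's to compare yours against. The statement itself is programmatic rather than mathematical --- it contains no equation, isomorphism, or universal property to verify --- and you are right to flag that any ``proof'' must first invent a precise content for it. Your proposed reading (lift $\otimes$ to the power object along the natural isomorphism $Hom(-,PA)\cong Sub(-\times A)$, with the first coordinate of $A^{*}$ carrying the Heyting content and the second the Lukasiewicz content, and check agreement with the monoidal forcing clause) is a reasonable and arguably the most charitable reconstruction, and it is consistent with the surrounding discussion of $\otimes$ as induced by a tensor product in the target monoidal category.

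However, what you have written is a plan, not a proof, and the gap sits exactly where you place it. The entire mathematical substance of your reconstructed statement is the claim that the Day-convolution tensor on $[C^{op},{\sf Monoid}]$, restricted to the subobject lattices $Sub(y(I)\times A)$ that define $P(A)$, coincides with the componentwise operation $(a_{1},a_{2})\boxtimes(b_{1},b_{2})=(a_{1}\land b_{1},a_{2}\otimes b_{2})$ on $A^{*}$; you say only that you ``expect this to reduce to'' the residuation adjunction plus completeness of $A$ and that the matching ``requires care.'' Nothing in the paper (which never specifies what algebraic structure $A^{*}=A\times A$ is supposed to carry, nor why the square of $A$ rather than $A$ itself is the right carrier) pins this down, and your definition of $\boxtimes$ is a guess that is never verified to satisfy associativity, compatibility with the restriction maps $a.u$, or stability under the reflector into $Set_{H}$. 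Until that coherence step is actually carried out --- or until the statement is given a content against which it could even fail --- the proposal cannot be counted as a proof; it is a well-motivated research programme for repairing an unproved and underspecified claim.
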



Now let us pause for a minute and take our breath to reflect on what we have already did.

Let $L$ be an $MV$ algebra. Let $H$ be its Heyting reduct obatined by deleting the Lukasiewicz conjunction. 
We can form $V^L$ the comulative heirarcy of sets,
which is an $L$ valued model of set theory, and similarly we can prove $V^H$. 

$V_L$ is the same as $Set_L$, which is a functor category,
a category of pre-seheaves, and only a quasi-topoi. We can also form $V_H$ and $Set_H$, which is also a functor category,
a category of sheaves and a topoi. The latter is a reflective sub-category of the former.

Given a many sorted theory higher order  $T$, one can form the topoi, $E(T)$, for which $Set_H$ forms a complete semantics, where
$H$ is the subobject classifier in the topoi, endowed with truth values corresponding to the connectives. We also have a notion of $H$
forcing for intuitionistic set theory, and this forcing relation captures genericity, meaning that for $p\in H$, if $p$ forces $\phi$, then $\phi$ is true
in any generic extension of $V^H$ obtained by a filter $G$ of $H$.
  
Now we {\it did} define a notion of forcing for many valued set theory, 
that does exactly the same thing, namely, the forcing relation, appropriately defined at the Lukasiewicz conjunctions, 
captures genericty.  Starting for many sorted logic of partial elements, one can define the category $CAT$ corresponding to $E(T)$.
$CAT$ is not a topoi, and it does not have a sub-object classifer, but it has one, that is almost one.
Then one can define an interpretation of such theories in $Set_L$, the same way, but the problem is how to define the semantics of the Lukasiewicz 
conjunction. This can be done like defining the notion of forcing.

So all our results, extending results from topoi of sheaves, to quasi-topoi of pre sheaves, 
were actully inspired by the following:

Let $C$ be a site, and let $Sh(C)$ be the functor subcategory of $[C^{op}, Set].$
Then the Yoneda lemma tells us, basically, that we have:
$$Hom(C,E)\to Hom_{Top}(E, Sh(C)).$$

Expressed in words the topos of all sheaves on a cite, is a reflective subcategory of the quasi-topos-
sheaves on this site.  

When we force with Heyting algebras, then we are capturing the global generic extension locally by forcing conditions, the global extension 
$V^H$, is in fact a category of sheaves. 

When we deal with higher order logic, then we end up with a topos with a Heyting algebra as a subobject-classifier.
This provides  a complete semantics 
for the theory.

Now intuitionistic set theory can be formulated as a a many sorted logic in higher order logic, 
and in this case,  this topos is no more than $V^H$,
where $H$ is Heyting algebra.

So in both cases we end up with a comulative heirachy of sets. Now we have two semantics on $V^H$. But this is only apparent.
The semantics obtained by forcing is  {\it the complete} semantics for set theory, obtained by the interpretation of set theory into topoi

Therefore, if one defines forcing corresponding to the Lukasiewicz connective, then this complete the definition of the interpretation
of many valued theories into a quasi topos.

\begin{theorem} 
\begin{enumarab}
\item Let $A$ be an $MV$ algebra and let $A^*$ be as defined. 
Let $P(A)$ be the set of al fuzzy subsets of $A$ that has a pullback. Then there is an interpaly 
between $P(A)$ and $Hom(-, A^*)$, in the sense that $Set_H$ is the category of fuzzy sets, it is a quasi-topoi, and a category of presheaves, that is
functors from $A$ to ${\sf Set}$. 
\item   Let $E(T)$ be as defined above. Then $E(T)\cong Set_H$, where $H$ is an $MV$ algebra, with truth values as defined above 
The algebra $H$ can be turned into a forcing notion, in the ramified language  $Set_H$, which is  the category of presheaves, 
i.e functors from $E$ to $Set$, applying the Yoneda lemma. To fine the forcing clauses, 
one transfers the truth values of of $\otimes$ and $\land$, defined on $A^*$,  to $A$, via the morphim from 
$\A\to  \A^*$.
\end{enumarab}
\end{theorem}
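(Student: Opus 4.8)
The plan is to prove the two parts in turn, each time reducing to constructions already established: the categories $Set_L$ from Section~2, the power--object formula $P(A)(I)=Sub_{\bar{C}}(y(I)\times A)$, and the tensor--Hom adjunction $a\otimes b\leq c$ iff $a\leq (b\to c)$ singled out above as the guiding principle.

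For part (1), recall that $Set_H$, with $H$ the Heyting reduct of $A$ obtained by identifying $\otimes$ with $\land$, was defined as the category of pairs $(B,\beta)$ with $\beta:B\times B\to H$ satisfying the symmetry and transitivity conditions of the earlier example, where all three incarnations $Set_H$, $V^H$ and $Sheave(H)$ were shown equivalent. It therefore suffices to exhibit $Set_H$ as a presheaf category carrying the stated power object. I would take $A^*=A\times A$ and let $P(B)$ consist of the fuzzy subsets $s:B\to A$ admitting a pullback against the diagonal $B\to A^*$, equivalently the singletons in the sense of the complete-$H$-set discussion above; the natural isomorphism $\phi:Hom(-,PB)\to Sub(-\times B)$ is then the one already asserted for $CAT_H$, read off pointwise via $s\mapsto \{(x,y): s(x)\land \delta(x,y)\leq s(y)\}$. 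That $Set_H$ is a quasi-topos but not a topos is immediate since each $(Sub(B),\leq)$ is only a residuated lattice, so there is no uniform subobject classifier, merely the almost one $P1=H$, exactly the situation described just before the statement. Finally, viewing $Set_H$ as functors from $A$ (regarded as a category whose arrows are the order relations) to $Set$ is the observation that a fuzzy set $(B,\beta)$ is the same datum as such a presheaf, via $a\mapsto \{x\in B:\beta(x,x)\geq a\}$ with the evident restrictions.

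For part (2), the equivalence $E(T)\cong Set_H$ after dropping $\otimes$ is the completeness statement quoted from [Handbook]: the category of definable types and definable total functions of the many-sorted higher order theory $T$ is a topos with subobject classifier $\Omega$ such that $Hom(1,\Omega)$ is the Tarski--Lindenbaum algebra of $T$, and this $\Omega$ is precisely the $H$ in question, so $E(T)$ is a topos with $\Omega=H$, hence equivalent to $Set_H$. The remaining content is the forcing part: $H$ becomes a forcing notion by running the construction of the previous theorem --- the Kripke--Joyal clauses for $\in,=,\land,\lor,\to,\forall,\exists$ --- with the site $C$ taken to be $H$ (or $A$), so that $\bar{C}$ is the presheaf category serving as the ramified language and the Yoneda embedding $y:H\to\bar{C}$ supplies the $C$-terms. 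The one new clause, for $\phi\otimes\psi$, is obtained by transporting the truth values of $\otimes$ and $\land$ living on $A^*=A\times A$ back to $A$ along the morphism $\A\to\A^*$, which is exactly the tensor-induced conjunction clause: $I\models\phi\otimes\psi$ iff there is a monomorphism $u:J\to I$ with $I\otimes J\models\phi\land\psi$. One then checks that this clause, with the forcing relation now extended to the whole fuzzy language, still captures genericity, i.e. that $p$ forces $\phi$ iff $\phi$ holds in every generic extension of $V^H$ by a filter of $A$ containing $p$.

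The main obstacle I anticipate is precisely this last verification: showing that the $\otimes$-clause defined via the monoidal structure transported from $A^*$ fits coherently into the Kripke--Joyal recursion without clashing with the already-defined clauses, and in particular that it is stable under the restriction maps $a.u$. This amounts to checking that the induced (Day-convolution) tensor product on the presheaf category $\bar{C}$ obtained via Yoneda from the $BL$ (resp.\ $MV$) structure on $C$ is compatible with the colimit construction of $V^{\bar{C}}$, and that the residuation $a\otimes b\leq c$ iff $a\leq(b\to c)$ lifts from the algebra to this tensor, so that $\otimes$ remains left adjoint to $\to$ at the level of the ramified language. Granting that, everything else --- the equivalences of the three incarnations of $Set_H$, the power object, the quasi-topos status --- is bookkeeping resting on the theorems already proved.
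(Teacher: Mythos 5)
The paper gives no proof of this theorem at all: the statement is followed immediately by the informal ``wrapping up'' list, and the only justification on offer is the surrounding prose about $CAT$, $CAT_H$, the power object $P(A)=(Sub(A),\rho)$ with its natural isomorphism $Hom(-,PA)\to Sub(-\times A)$, the reflectivity of $CAT_{Hey(A)}$ in $CAT_A$, and the clause $I\models\phi\otimes\psi$ iff there is a monomorphism $u:J\to I$ with $I\otimes J\models\phi\land\psi$. Your sketch assembles exactly those ingredients for exactly the purposes the paper intends --- the three equivalent incarnations of $Set_L$, the equivalence $E(T)\cong Set_H$ via the subobject classifier $\Omega$ with $Hom(1,\Omega)$ the Tarski--Lindenbaum algebra, and Yoneda turning the site into a ramified language --- so in terms of approach you are aligned with the paper, and your write-up is in fact more explicit than anything the paper supplies.

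That said, the obstacle you flag at the end is a genuine gap, not mere bookkeeping: for part (2) to have content one must check that the $\otimes$-clause is stable under the restriction maps $a.u$, that the tensor induced on $\bar{C}$ (via Day convolution from the monoidal structure on the site) is compatible with the colimit construction of $V^{\bar{C}}$, and that the residuation $a\otimes b\leq c$ iff $a\leq(b\to c)$ lifts to the level of the ramified language so that the forcing relation still captures genericity. None of this is carried out in the paper, and the phrase ``transfer the truth values from $A^*$ to $A$ along $\A\to\A^*$'' is never made precise there either. So your proposal is as complete as the paper's own treatment; the verification you identify remains open in both.
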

Wrapping up, what we have been  doing so far:

\begin{enumarab}

\item We extend results of topoi of sheaves, which forms a reflective subcategory of the quasi-topos of  pre-sheaves. 

\item In this context, we have been forcing with {\it two sites}, the first forcing is 
ordinary forcing in the sense that the meta language is usual first order thery.
This was implemented using the Yoneda lemma in category theory, which says that you can look at a site, in the more concrete case 
a Heyting algebra as a category of functors on 
this site, to {\sf Set}.This is a natural isomorphism (in the categorial sense, in more than one parameter). 

This is also a natural generalization of ordinary forcing 
where the site represents the forcing conditions and the set of all such functors represents the ramified
language. The advantage to work on the level of sheaves and topoi, 
is that this view lends itself to other generalization, like Kripke forcing, and intuitionistic forcing.
This also enabled us to view the comulative heirarchy of sets $V^H$, based on a Heyting algebra,
as a topos of sheaves, and this is precisely 
the image of the Yoneda lemma applied to the forcing conditions, namely, the site, which is $H$.

From the different view of interpreting many sorted
higher order logic in topoi, we ended up at the same point. If we restrict to set theory (formulated as a higher order
logic), then the completeness theorem, is provided by
giving truth values to the sub-object classifier of the resulting topos of definable sets and total functions.
This is the same topos, as above, namely $V^H$. 

\item Now, the forcing done above was implemented by forcing (on sheaves) on different cites, but  {\it on sheaves}.
It is very natural therefore, to ask whether the forcing can be implemented by {\it pre-sheaves}, which is a larger functor category.
Many valued logic gives this forcing. Indeed, in this case, we have a new connective, namely Lukasiewicz conjunction, 
so we are actually working outside first order logic. Furthermore, the universe of sets $Set_L$ 
defined above for an $MV$ algebra $L$ consists of all presheaves on $L$, in particular, it is only
a quasi-topoi.
In usual forcing, even in topoi, other complex clauses are defined from the primitive ones using the usual meaning of $\land$ $\lor$,etc.
But when we have a fuzzy connective, 
we have to think diferently. 

So we observed two things. An $MV$ algebra has a monodial facet. Furthermore, in abelian categories,
we have the bifunctors tensor and Hom are dual, expressed on the algebra level we have $a\otimes b\leq c$ iff $a\leq b\to c$.
In usual topoi forcing, the Yoneda lemma, takes us to sheaves, that is functors from the cite to ${\sf Set}$. The natural thing to do is to
replace {\sf Set}, by {\sf Monoid}, which has a tensor product, and define the semantics of $\otimes$ using this tensor product.

\item Now we have a semantics for $V^L$, via forcing. Starting from a higher order many sorted many valued theory $T$, one can implement the above 
strategy, to define a  quasi-topoi, which only  contains only an almost all subobject classifier. But we not need a full sub-object classifier,
to interpret $T$ in quasi-topoi, in fact we do not haave one. So the semantics of the Lukasiewicz conjunction 
is defined like in the case of forcing. In the special case when  $T$ is set theory, 
then the semantics defined by forcing, will provide a complete semantics 
for fuzzy set theory. 

\end{enumarab}

\begin{theorem}
\begin{enumarab}
\item Let $T$ denote set theory formulated as a higher order many sorted theory. 
Let $E(T)$ be the category of definable terms and total functions. Then $E(T)$ is a topos. 
Let $H$ be the sub-object classifier of $E(T)$, and let $Set_H$ be the category of sheaves, that is functors from $H$ to set.
Give $H$ the truth values of the interpretation. Let $G$ be a translation function from $L(T)$ to the first order language of set theory.
Then $Set_H$ is naturally isomorphic to $E(T)$. Furthermore, given a formula $\phi$ in the many sorted language, then
the interpretation of $\phi[x]$ is true in $Set_H$, iff there exists $p\in H$,  such that $p$ forces $G(\phi)[[t]$, 
where $t$ is a term in the ramified language,
namely a sub-functor of a  sheaf on $C$.

\item Let $T$ denote fuzzy set theory formulated as a higher order many sorted theory.
Let $E(T)$ be the category of definable terms and total functions. Then $E(T)$ is a quasi-topos. 
Then there is an almost everywhere sub-object classifier, call it $A$, which is an $MV$
algebra. Let $A^*=A\times A$  
Give $A^*$ the truth values of $\otimes$ and $\land$.
Then $E(T)$ is equivalent to the category of fuzzy sets, which is a quasi-topos, namely the pre-sheaves
on $A$. Let $G$ be a translation as above. Then the interpretation of $\phi[x]$ is true in fuzzy $Set_A$ iff there exists $p\in A$,
such that $p$ forces $G(\phi)[t],$ where $t$ is a sub-functor of a pre-sheaf on $A$.
\end{enumarab}
\end{theorem}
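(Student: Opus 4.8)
The plan is to prove (i) and (ii) in parallel, since (ii) is obtained from (i) by the two substitutions already set up in the Fuzzy Forcing subsection: replacing ${\sf Set}$ by ${\sf Monoid}$ as the target of the presheaf construction, and replacing the Heyting subobject classifier $H$ by the $MV$ almost-classifier $A$ together with $A^{*}=A\times A$ to carry the second conjunction. For the categorical frame of (i): that $E(T)$ is a topos is the completeness package for the logic of partial elements of [Handbook], so here one only checks that $E(T)$ has finite limits, is cartesian closed, and has subobject classifier $\Omega=H$ with $Hom(1,\Omega)$ the Tarski--Lindenbaum algebra of $T$, which --- once equipped with the truth values of the interpretation --- is a complete Heyting algebra. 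To get the natural equivalence $Set_H\cong E(T)$, I would use that a topos whose terminal object carries a separating complete Heyting algebra $H$ of subobjects is the localic topos $Sh(H)$: the comparison functor sending $X$ to the presheaf on $H$ obtained by restricting $Hom(-,X)$ lands in sheaves and is an equivalence. Combined with the Example in which $Set_L$, $V^L$ and $Set^L$ were shown equivalent, this also identifies $Set_H$ with the cumulative hierarchy $V^H$ of the forcing theorem above.

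For the forcing half of (i), I would run an induction on the complexity of the many-sorted formula $\phi$, comparing the Kripke--Joyal reading of $\phi$ in $E(T)\cong Set_H\cong V^H$ with the forcing semantics of its translate $G(\phi)$ over the ground model indexed by $H$. The translation $G$ is chosen precisely so that atoms, membership and equality match the clauses $I\models a\in b$ and $I\models a=b$ of that theorem, and the steps for $\land,\to,\lor,\forall,\exists$ are then verbatim the forcing clauses already established there. Finally, ``$\phi$ is true in $Set_H$'' unwinds to ``the truth value $\|G(\phi)[t]\|\in H$ of the translate at the name-term $t$ of $x$ is the top element''; since $H$ is complete, the forcing clauses give $\|G(\phi)[t]\|=\bigvee\{p:p\Vdash G(\phi)[t]\}$, so by the glueing axiom this holds exactly when some $p\in H$ (namely that supremum) forces $G(\phi)[t]$, which is the ``there exists $p$'' formulation in the statement, $t$ being the sub-functor of the relevant sheaf on $C$ representing the $H$-name of $x$.

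For (ii) the skeleton is unchanged but two things differ. First, $E(T)$ is now only a quasi-topos: replacing the intuitionistic part of the axioms by fuzzy propositional logic, the category $CAT$ of $A$-valued fuzzy sets is complete and has pullbacks but no genuine subobject classifier --- only the almost-everywhere classifier $A=P(1)=(Sub(1),\rho)$, an $MV$ algebra --- and the equivalence $E(T)\simeq CAT$, along with its description as the presheaf category $[A^{op},{\sf Set}]$ (presheaves, not sheaves, which is precisely why one leaves the world of topoi), comes from the interplay $Hom(-,A^{*})\cong Sub(-\times A)$ of the earlier theorem, with $A^{*}=A\times A$ carrying the two truth values for $\land$ and $\otimes$. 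Second, the induction on $\phi$ acquires one genuinely new case, the Lukasiewicz conjunction: since $\otimes$ has no ``meta-meaning'' clause, I would invoke the fuzzy forcing clause ``$I\models\phi\otimes\psi$ iff there is a monomorphism $u:J\to I$ with $I\otimes J\models\phi\land\psi$'', where $\otimes$ on presheaves is the tensor induced along the Yoneda embedding by the monoidal structure of the site, and then verify --- using the adjunction $a\otimes b\le c$ iff $a\le(b\to c)$ in the $MV$ algebra --- that this clause computes precisely the $MV$-semantics of $\otimes$ transported to $A^{*}$; this is the ``equivalent definition'' promised in the Fuzzy Forcing subsection.

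The main obstacle is this last reconciliation together with its structural shadow: checking that the monoidal forcing clause for $\otimes$ is well defined (independent of the witnessing monomorphism $u$) and literally agrees with the residuated operation on $A^{*}$, and dually that the comparison functor into the presheaf category is an equivalence even in the absence of a true subobject classifier, so that ``the interpretation in fuzzy $Set_A$'' and ``forcing over $A$'' denote the same object. By contrast item (i) is mostly bookkeeping once the localic identification $E(T)\cong Sh(H)$ and the Kripke--Joyal clauses of the forcing theorem above are in hand; the content of (ii) sits in the tensor/$Hom$ duality on $A^{*}$, which is the whole reason the Fuzzy Forcing subsection was developed.
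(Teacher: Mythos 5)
The paper states this theorem without any proof (it is followed immediately by the section on many valued forcing), so there is no argument of the author's to compare yours against; what can be judged is whether your proposal closes the statement using the material the paper actually develops, and on that score there are two genuine gaps. The first is the final biconditional in both items: ``the interpretation of $\phi[x]$ is true in $Set_H$ iff there exists $p\in H$ such that $p$ forces $G(\phi)[t]$.'' Under either notion of forcing the paper sets up --- $p\Vdash\phi$ iff $M[G]\models\phi^G$ for every generic $G$ containing $p$, or the Kripke--Joyal clauses $I\models\phi$ --- the mere existence of some forcing condition is strictly weaker than the truth value being the top element: forcing is downward closed (and $0$, lying in no proper generic filter, forces everything vacuously), so $\exists p\,(p\Vdash\phi)$ holds for every $\phi$. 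Your repair, that $\|G(\phi)[t]\|=\bigvee\{p:p\Vdash G(\phi)[t]\}$ and ``that supremum forces it,'' is circular: the supremum of the set of forcing conditions always forces $\phi$; what has to be shown is that this supremum equals $1$, which is exactly the assertion being proved. To make the step non-vacuous you must either establish the equivalence with ``$1\Vdash G(\phi)[t]$'' (equivalently, every $p$ forces it) or with density of $\{p:p\Vdash G(\phi)[t]\}$, and neither follows from what you wrote.

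The second gap is the $\otimes$ case of the induction in item (ii), which you correctly isolate as the whole content of the fuzzy half but then only invoke. The clause ``$I\models\phi\otimes\psi$ iff there is a monomorphism $u:J\to I$ with $I\otimes J\models\phi\land\psi$'' is nowhere shown to be independent of the witnessing $u$, nor is it shown that the tensor induced on the presheaf category $[A^{op},{\sf Monoid}]$ by the monoidal structure of the site restricts on $A^{*}=A\times A$ to the residuated product of the $MV$ algebra; the adjunction $a\otimes b\leq c$ iff $a\leq(b\to c)$ motivates this but does not prove it, and the paper offers no such verification either. A smaller but real issue in item (i): the localic identification $E(T)\simeq Sh(H)$ needs $H=Sub(1)$ to be a separating family of subobjects of the terminal object of $E(T)$, which fails for a general higher-order theory and requires an argument specific to $T$ being set theory; as written you assert it. Since the paper supplies no proof, these are not deviations from the author's argument but places where any proof of the theorem as literally stated would have to do more work, or where the statement itself needs sharpening (e.g.\ replacing ``there exists $p$'' by ``$1$ forces'').
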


\section{Many valued forcing}

Now let us produce many valued forcing, by modifying usual forcing.
Let $L$ be an $MV$ algebra, using $L$ one can define a many valued model, like in the Boolean case:
Inductively, on ordinals:
$V_0^L=\emptyset$, $V_{\alpha+1}^L=$ the set of all functions $x$ with $dom(x)\subseteq V_{\alpha}^H$ and values in $H$, 
and $V^L=\bigcup_{\alpha\in On}V_{\alpha}^H$.
In forming $V^L$, we change the the value of subset, hence equality in $L$. This reflects 
the fact that if we indentify the two conjuncts obtaining the Heyting algebra $H$, then $Set_H$ is only a refective subcategory of 
$Set_L$. The former consists of {\it only sheaves}, that is 'glued' functors from $H^{op}$ to ${\sf Set}$;
it is a topos, while the second consists of {\it all pre-sheaves}, which is just only a quasi topos.

Now to form $V^L$, the cumulative heirarchy of sets:
$$||x\in y||_L=\sum_{t\in dom(y)}(||x=t||\otimes y(t))$$
and
$$||x\subseteq y||_L=\prod_{t\in dom(x)}(x(t)\otimes(1\to  ||t\in y||),$$
$$||x=y||=||x\subseteq y||\otimes ||y\subseteq x||.$$
and the value of other formulas is obtained inductively as usual.
Our definition is also inspired by the fact that $\otimes$ and $\to$ are dual.
If $H$ is the Heyting algebra describled above then, that for $x,y\in V^L$, we have 
$$||x\in y||_L=||x\in y||_H$$ and conversely 
$$||x\subseteq y||_L\leq ||x\subseteq y||_H.$$

\begin{theorem} $V^{L}$ is a model of the $MV$ predicate logic, togother with extentionality, pairing, infinity, powerset, 
\end{theorem}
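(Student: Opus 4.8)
The plan is to verify the axioms of $MV$-predicate set theory one at a time by direct computation with the Boolean-valued–style truth function $\|\cdot\|_L$ defined above, mimicking the classical Scott–Solovay argument for $V^{\B}$ but keeping careful track of where the Lukasiewicz conjunction $\otimes$ replaces the lattice meet $\land$. The key structural fact I would isolate first is a \emph{mixing/fullness lemma}: for any family $(x_i)_{i\in I}$ in $V^L$ and any family of pairwise "disjoint" truth values $(p_i)_{i\in I}$ in $L$, there is $x\in V^L$ with $\|x=x_i\|_L\geq p_i$; together with the residuation law $a\otimes b\leq c\iff a\leq(b\to c)$ (valid in every $MV$ algebra, hence in $L$) this is the engine that drives the existential axioms. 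I would also record the monotonicity and continuity properties of $\otimes$ and $\sum$ (arbitrary joins, since $L$ is complete) that make the inductive clauses well defined, and the comparison inequalities already stated in the excerpt, $\|x\in y\|_L=\|x\in y\|_H$ and $\|x\subseteq y\|_L\leq\|x\subseteq y\|_H$, which let me import the Heyting-valued verifications of the purely $\land$-$\lor$-$\forall$ axioms.

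Next I would go through the listed axioms. For \emph{extensionality} one shows $\|\forall z(z\in x\leftrightarrow z\in y)\to x=y\|_L=1$; unwinding the definition of $\|x\subseteq y\|_L$ and using that over the stalks each $x(t)$ multiplies against $(1\to\|t\in y\|)$, this reduces to an $MV$-algebra identity that follows from residuation. For \emph{pairing} and \emph{infinity} one exhibits explicit names (the pair name $\{\langle x,\mathbf 1\rangle,\langle y,\mathbf 1\rangle\}$, and the standard $\omega$-name with all values $1$) and checks the defining formulas evaluate to $1$ — here $\otimes$ behaves like $\land$ on the $\{0,1\}$-valued fragment, so these go through essentially as in the Boolean case. \emph{Powerset} is the step where the monoidal structure genuinely intervenes: given $x\in V^L$, take the name $u$ with $\mathrm{dom}(u)=\{v: \mathrm{dom}(v)\subseteq\mathrm{dom}(x)\}$ and $u(v)=\|v\subseteq x\|_L$, and verify $\|\forall z(z\subseteq x\to z\in u)\|_L=1$; the subtlety is that one must use the $\otimes$-version of $\subseteq$ consistently on both sides, and the inequality $\|x\subseteq y\|_L\leq\|x\subseteq y\|_H$ ensures the name $u$ is "large enough" while residuation gives the reverse containment of truth values.

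The step I expect to be the main obstacle is \emph{soundness of the $MV$ predicate calculus itself}, i.e.\ showing that $V^L$ with $\|\cdot\|_L$ validates all the inference rules and axiom schemes of Lukasiewicz predicate logic (not just the set-theoretic axioms) — in particular the axioms governing $\otimes$, its residual $\to$, and the interaction of $\otimes$ with the quantifiers $\sum$ and $\prod$. Classically this is the routine "every $MV$ identity lifts" observation, but because $V_{\alpha+1}^L$ is built from \emph{all} functions into $L$ (so $V^L=Set_L$ is only a quasi-topos, not a topos) one does not automatically get the glueing needed to interpret bounded quantifiers correctly, and one has to check that the recursive clause for $\forall$ over the proper class of names still satisfies the distribution laws with $\otimes$. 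My plan here is to reduce each such schema to a pointwise inequality in $L$ by pushing the evaluation down to $\mathrm{dom}$'s and invoking completeness of $L$ to commute the joins/meets past $\otimes$ (which is legitimate precisely because in a complete $MV$ algebra $\otimes$ distributes over arbitrary joins and $\to$ turns arbitrary joins in the first argument into meets); the remaining finitary $MV$ identities are then handled by the standard functional representation of $MV$ algebras. Once soundness of the calculus is in hand, the named-witness verifications of extensionality, pairing, infinity and powerset are bookkeeping, and the theorem follows.
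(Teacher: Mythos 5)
Your proposal is correct and follows essentially the same route as the paper: exhibit the canonical names (pair name with value $1$, power-set name built from subfunctions of $\mathrm{dom}(X)$) and verify the axioms by computing $\|\cdot\|_L$ using residuation of $\otimes$, which is exactly the paper's adaptation of the Scott--Solovay argument; your power-set name $u(v)=\|v\subseteq x\|_L$ is an equivalent variant of the paper's reduction via $u'(t)=X(t)\otimes\|t\in u\|$ and $\|u\subseteq X\|\leq\|u=u'\|$. You are in fact more careful than the paper on the soundness of the $MV$ predicate calculus itself (distribution of $\otimes$ over the complete joins in the quantifier clauses), which the paper subsumes under ``like the standard proof.''
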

\begin{demo}{Proof} Like the standard proof. Let $a,b\in V^L$, let $c=\{a,b\}^B\in V^B$ such that $dom(c)=\{a,b\}$ and $c(a)=c(b)=1$. 
Then $||a\in c\land b\in c||=1$. This combined with separation gives the Pairing axiom.
We prove that for $X\in V^L$ there is $Y\in V^L$ such that
$||Y\subseteq X||=1$ and $||(\forall z\in X)(\phi(z)\leftrightarrow z\in Y)||=1.$
If $X\in V^L$, then letting $Y\in V^L$ as follows
$dom(Y)=\bigcup\{dom(u):u\in dom(X)\}$, $Y(t)=1$ for all $t\in dom(Y)$.
To prove power set. We show that for every $X\in V^B$, there is a $Y\in V^L$ such that
$||\forall u(u\subseteq X\to u\in Y)||=1$. Let $dom(Y)=\{u\in V^L: dom(u)=dom(X) \text { and } u(t)\leq X(t)\}$, $Y(u)=1$ for all $u\in dom(Y).$
Note that if $u\in V^L$ is arbitrary, let $u'\in V^L$ be such that $dom(u')=dom(X)$ and $u't)=X(t)\otimes ||t\in u||$ for all $t\in dom(X)$. Then
$$|u\subseteq X||\leq ||u=u'||.$$
We prove that for every $X\in V^L$ there is a $Y\in V^L$ such that
$$||(\forall u\in X)(\exists v\phi(u,v)\to \exists v\in Y\phi(u,v)||=1.$$ 
Let $dom(Y)=\bigcup\{S_u: u\in dom(X)\}$ 
$Y(t)=1$ for all $t\in dom(Y)$, where
$S_u\subseteq V^L$ is some set such that
$$\sum_{v\in V^L}||\phi(u,v)||=\sum_{v\in S_u}||\phi(u,v)||.$$
\end{demo}

Now every set (in $V$) has a canonical name in $V^L$ defined by
$\emptyset=\emptyset$ and  For every $x\in V$ let $\hat{x}\in V^L$ function with domain $\{\hat{y}: y\in x\}$ and for all $y\in x$, 
$\hat{x}(\hat{y})=1$.
Let $M$ be a countable transitive model of $ZFC$ and let $L$ be a an $MV$ algebra, 
Let $G$ be a maximal generic ultrafilter. Such ultrafilters exist,  by appeal to the Baire category theorem for Polish spaces, since 
$Spec(L)$ is a compact Hausdorff second countable space.
Then one can define the generic extension $M[G]$. 

\begin{definition}For every $x\in M^B$ 
we define $x^G$ by $\emptyset^G=\emptyset$ and $x^G=\{y^G: x(y)\in G\}$. We let $M[G]=\{x^G: x\in M^B\}$.
\end{definition}

\begin{theorem} Let $G$ be an $M$ generic ultrafilter on $L$. Then

(i) $x^G\in y^G\text { iff } ||x\in y||\in G$

(ii) $x^G=y^G \text { iff } ||x=y||\in G$
\end{theorem}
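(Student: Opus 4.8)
The statement to prove is the standard ``Truth Lemma'' (or more precisely its base case) for $MV$-valued models: for an $M$-generic ultrafilter $G$ on the complete $MV$ algebra $L$, one has $x^G \in y^G$ iff $\|x\in y\| \in G$, and $x^G = y^G$ iff $\|x = y\| \in G$. The plan is to prove the two statements simultaneously by transfinite induction on the ranks of the names $x, y \in M^L$ (indeed on $\max$ of the ranks, or on the ordinal $\rho(x)\cup\rho(y)$ with a suitable ordering), exactly as in the Boolean-valued case, but keeping careful track of the fact that we now have the Lukasiewicz $\otimes$ in the defining clauses rather than $\wedge$, and that $G$ is an ultrafilter with the genericity property appropriate to $\mathrm{Spec}(L)$ as established just before the statement.

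First I would record the two facts about $G$ that the proof uses: (a) $G$ is a prime/proper filter of $L$, so it is closed upward, closed under finite $\otimes$ (equivalently under $\wedge$ since in a prime filter these agree on membership), and for each $a$ exactly one of $a$, $\neg a$ lies in $G$; and (b) genericity: for every dense (or every maximal) subset $D$ of $L$ lying in $M$, $G\cap D\neq\emptyset$, equivalently $G$ meets every family $\{a_t\}_{t\in I}\in M$ whenever $\sum_{t\in I} a_t = 1$, so that $\sum_{t\in I} a_t \in G$ implies some $a_t\in G$. This last point is the $MV$-analogue of ``$G$ meets every maximal antichain'' and is exactly where the Baire-category argument quoted in the excerpt (for the second-countable compact Hausdorff space $\mathrm{Spec}(L)$) is invoked. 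I would then prove the easy implications first. For $\|x\in y\|\in G \Rightarrow x^G\in y^G$: unfold $\|x\in y\| = \sum_{t\in\mathrm{dom}(y)}(\|x=t\|\otimes y(t))$; genericity gives some $t$ with $\|x=t\|\otimes y(t)\in G$, hence (upward closure, since $\otimes \le \wedge$) both $\|x=t\|\in G$ and $y(t)\in G$; the induction hypothesis at lower rank gives $x^G = t^G$, and $y(t)\in G$ gives $t^G\in y^G$ by the definition of $y^G$, so $x^G\in y^G$. The converse $x^G\in y^G\Rightarrow\|x\in y\|\in G$ is similar: $x^G\in y^G$ means $x^G = t^G$ for some $t\in\mathrm{dom}(y)$ with $y(t)\in G$; induction gives $\|x=t\|\in G$; then $\|x=t\|\otimes y(t)\in G$ (filter closed under $\otimes$), and this is below $\|x\in y\|$, so we are done by upward closure. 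The $\subseteq$ and $=$ clauses are handled the same way, unfolding $\|x\subseteq y\| = \prod_{t\in\mathrm{dom}(x)}\bigl(x(t)\otimes(1\to\|t\in y\|)\bigr)$ and $\|x=y\| = \|x\subseteq y\|\otimes\|y\subseteq x\|$, using that a filter is closed under arbitrary (existing) meets that land in it, and using the inner induction hypothesis for $\|t\in y\|$ with $t\in\mathrm{dom}(x)$ of strictly smaller rank.

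The main obstacle, and the only place where the $MV$ setting genuinely differs from the Boolean one, is the step for $x^G\subseteq y^G\Rightarrow\|x\subseteq y\|\in G$, i.e.\ showing that membership of $G$ is preserved when passing to the \emph{infinitary product} $\prod_{t\in\mathrm{dom}(x)}(\cdots)$. In the Boolean case one uses that $\|x\subseteq y\| = \bigwedge_t(-x(t)\vee\|t\in y\|)$ and a maximality/genericity argument showing $G$ contains this infimum whenever it contains every factor ``generically''; in the $MV$ case one must instead argue with the residuum $1\to(-)$ (which in an $MV$ algebra is $\neg(-)\oplus(-)$-type behaviour) and check that the relevant set $\{a : a\otimes\|x\subseteq y\| \text{ or } a\otimes(\text{some factor fails})\}$ is dense in $M$, so that genericity forces $\|x\subseteq y\|\in G$. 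Concretely I would fix $a\in G$ arbitrary, consider the name $x'$ with $\mathrm{dom}(x')=\mathrm{dom}(x)$ and $x'(t) = x(t)\otimes\|t\in y\|$ (this is the construction already used in the excerpt's proof of the Power Set axiom, where it is shown $\|x\subseteq y\|\le\|x=x'\|$), and push the problem down to the equality case already handled by the induction hypothesis. I would also need to double-check one genuinely $MV$-specific identity, namely that $a\le b\to c$ iff $a\otimes b\le c$ (the adjunction stated in the excerpt) so that the factor $x(t)\otimes(1\to\|t\in y\|)$ behaves correctly under the filter; and that in a prime filter of an $MV$ algebra, $a\otimes b\in G \iff a\in G \text{ and } b\in G$ (one direction is $\otimes\le\wedge$, the other uses primeness together with $\neg a\oplus\neg b = \neg(a\otimes b)\notin G$). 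Once these $MV$-lattice lemmas are in hand, the induction goes through verbatim as in Bell's treatment of Boolean-valued models, and the proof concludes by assembling the two directions of each biconditional.
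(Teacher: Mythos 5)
Your proposal follows essentially the same route as the paper's proof: a simultaneous induction on the ranks of the names, using genericity of $G$ to extract a witness $t$ from the sum in the clause for $\|x\in y\|$, and using upward closure together with closure of the filter under the (infinitary) product to handle the clause for $\|x\subseteq y\|$, with the induction hypothesis supplying $x^G=t^G\leftrightarrow\|x=t\|\in G$ at lower rank. The paper's own proof is just a terse chain of biconditionals that leaves the genericity, the filter-closure facts for $\otimes$, and the justification of the hard direction of the product step implicit, all of which you spell out (correctly) in more detail.
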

\begin{demo}{Proof} $$|x\in y|\in G\leftrightarrow \exists t\in dom(y)(y(t)\in G, |x=t|\in G)$$
$$\leftrightarrow \exists t(y(t)\in G x^G=t^G)$$
$$\leftrightarrow x^G\in \{t^G:y(t)\in G\}$$
$$\leftrightarrow x^G\in y^G$$

$$|x\subseteq y|\in G\leftrightarrow \prod_{t\in dom(x)}(x(t)\otimes (1\to |t\in y|)\in G$$
$$\leftrightarrow \forall t\in dom(x)(x(t)\in G \land 1\to |t\in y|\in G$$
(since $1\in G)$, then:)
$$\leftrightarrow \forall t(x(t)\in G\implies t^G\in y^G)$$
$$\longleftrightarrow x^G\subseteq y^G$$

\end{demo}

\begin{theorem} If $G$ is an $M$ generic ultrafilter on $B$, then
$$M[G]\models \phi(x_1,\ldots x_n)\Longleftrightarrow ||\phi(x_1,\ldots x_n)||\in G.$$
\end{theorem}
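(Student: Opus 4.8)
The plan is to prove the equivalence by induction on the complexity of $\phi$, following the classical Boolean-valued pattern, the only extra work being to check that the $MV$-connectives are respected by passage to the quotient $L/G$. The atomic cases $\phi\equiv(x_i\in x_j)$ and $\phi\equiv(x_i=x_j)$ are precisely the content of the preceding theorem, which gives $x^G\in y^G\iff\|x\in y\|\in G$ and $x^G=y^G\iff\|x=y\|\in G$; in particular the occurrences of $\otimes$ inside the defining clauses $\|x\in y\|_L=\sum_t(\|x=t\|\otimes y(t))$ and $\|x\subseteq y\|_L=\prod_t(x(t)\otimes(1\to\|t\in y\|))$ are already absorbed into that base case, so no circularity is introduced. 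The first structural point to record is that, since $G$ is a \emph{maximal} generic ultrafilter, $a\in G$ or $\neg a\in G$ for every $a\in L$, so the quotient homomorphism $q\colon L\to L/G$ has two-element image; in $\{0,1\}$ the \L ukasiewicz operations collapse to the Boolean ones ($1\otimes 1=1$, $1\otimes 0=0\otimes 0=0$, $a\to b$ is Boolean implication, $\oplus$ is $\lor$), so for all $a,b$ we have $a\otimes b\in G\iff a\in G$ and $b\in G$, and $a\to b\in G\iff(a\in G\Rightarrow b\in G)$.

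For the connective cases I would argue uniformly: the valuation clauses give $\|\phi\land\psi\|=\|\phi\|\land\|\psi\|$, $\|\phi\lor\psi\|=\|\phi\|\lor\|\psi\|$, $\|\neg\phi\|=\neg\|\phi\|$, $\|\phi\to\psi\|=\|\phi\|\to\|\psi\|$ and $\|\phi\otimes\psi\|=\|\phi\|\otimes\|\psi\|$; applying $q$ and using that $G$ is a prime (hence lattice) filter closed under the remarks above, together with the induction hypothesis, one reads off the classical clause for the corresponding connective in the transitive model $M[G]$. Concretely, for $\otimes$: $\|\phi\otimes\psi\|\in G\iff\|\phi\|\in G$ and $\|\psi\|\in G\iff M[G]\models\phi$ and $M[G]\models\psi\iff M[G]\models\phi\otimes\psi$, the last step because $\otimes$ is interpreted classically (as $\land$) in the two-valued structure $M[G]$. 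The cases of $\land,\lor,\neg,\to$ are identical with the standard Boolean-valued argument.

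The quantifier cases are where genericity is essential. Recall $\|\exists v\,\phi(v)\|=\sum_{a\in V^L}\|\phi(a)\|$. The implication $M[G]\models\exists v\,\phi\Rightarrow\|\exists v\,\phi\|\in G$ is immediate, since a witness $a$ with $M[G]\models\phi(a)$ gives $\|\phi(a)\|\in G$ by induction and $\|\phi(a)\|\le\|\exists v\,\phi\|$. For the converse, suppose $\|\exists v\,\phi\|\in G$ but $\|\phi(a)\|\notin G$ for every name $a$; form $D=\{b\in L:b\le\|\phi(a)\|\text{ for some }a\in V^L\}\cup\{b\in L:b\wedge\|\exists v\,\phi\|=0\}$, which lies in $M$ and is dense in $L\setminus\{0\}$ (given $c\ne 0$, either $c\wedge\|\exists v\,\phi\|=0$, or else $c\wedge\|\phi(a)\|\ne 0$ for some $a$ because $\|\exists v\,\phi\|$ is the supremum of the $\|\phi(a)\|$). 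Since $G$ is $M$-generic it meets $D$; it cannot meet the second block (that would put $0$ in $G$), so some $b\in G$ has $b\le\|\phi(a)\|$, forcing $\|\phi(a)\|\in G$, a contradiction. The universal quantifier is then handled dually, either by the analogous density argument for $\|\forall v\,\phi\|=\prod_a\|\phi(a)\|$, or by rewriting $\forall v\,\phi$ as $\neg\exists v\,\neg\phi$ and invoking the cases already done. An alternative route is a maximum principle, $\|\exists v\,\phi\|=\|\phi(a)\|$ for a single name $a$, which follows from completeness of $L$ by a maximal-antichain argument and is also what underlies the verification (in the preceding theorem) that $V^L$ models Collection.

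The main obstacle is precisely this existential step: it is here that $M$-genericity of $G$ (and not merely that it is an ultrafilter) is needed, and one must be careful that the relevant dense sets are genuinely elements of $M$ — second countability of $Spec(L)$ and the Baire category theorem, noted earlier, guarantee that enough such $G$ exist. A secondary, purely bookkeeping point is that the collapse of the $MV$-operations modulo $G$ must be shown compatible with the inductive clauses for $\|\cdot\|$ on $V^L$, so that the atomic case genuinely discharges all the $\otimes$'s and $\to$'s hidden inside $\|x\in y\|_L$ and $\|x\subseteq y\|_L$; once that is in place the induction runs exactly as in the Boolean-valued model, with $\otimes$ behaving like $\land$ throughout because $L/G$ is two-valued.
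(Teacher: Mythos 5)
Your proof follows essentially the same route as the paper's: the atomic cases are discharged by the preceding theorem on $x^G\in y^G$ and $x^G=y^G$, and the remaining cases go by induction on the complexity of $\phi$ using maximality of $G$ (and, for the quantifiers, genericity). The paper states this in two sentences; your write-up merely supplies the details it leaves implicit, namely the collapse of the $MV$-operations modulo $G$ and the density argument for the existential clause.
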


\begin{demo}The atomic formulas are dealt with above. The rest follows by induction using that $G$ is maximal.
\end{demo}

This notion of forcing satisfies the folowing forcing lemma for $p\in L$:
$$p\models \phi(a_1,\ldots a_n)\Longleftrightarrow \forall G(p\in G)(M[G]\models \phi(a_1^G,\ldots a_n^G).$$ 

Let $L$ be an $MV$ algebra. We can form a category $Set_L$ based on $L$, this category is not a topoi, though.
This category is equivalent to $V^L$, which is a functor category of of presheaves. Here unlike the Heyting case, sheaves
are {\it not enough}, in particular, it is not a {\it topoi}. 

The objects are pairs 
$(A,\alpha)$ where $\alpha:A\times A\to L$ is a map such that,
$\alpha(x,y)\leq \alpha(x,x)\land \alpha(y,y)$,  $\alpha(x,y)=\alpha(y,x)$ 
and $\alpha(x,y)\otimes(\alpha(y,y)\to \alpha(y,x)\leq \alpha(x,z)$. The morphisms between the objects $(A,\alpha)$ $(B,\alpha)$ are maps $f:A\to B$ 
such that
$(\forall x,y\in A)(\beta(f(x),f(y))\geq \alpha(x,y)$ and $(\forall x\in A)(\alpha(x,x)=\beta(f(x),f(x)))$. 
This category is complete but does not have a suboject classifier. 

Now, from $V^L$ we can obtain another (apparently different) category as follows. First we identify elements $u,v\in V^L$ for which $||u=v||=1$.
The objects of $Set^L$ are the identified objects and arrows are those identified $f\in V^L$ such that $||f\text { is a function }||=1$.

\begin{theorem} $Set^L$ and $Set_L$ are equivalent.
\end{theorem}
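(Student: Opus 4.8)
\emph{Proof proposal.} The plan is to produce two functors $\Phi\colon Set^L\to Set_L$ and $\Psi\colon Set_L\to Set^L$ and to exhibit natural isomorphisms $\Psi\circ\Phi\cong\mathrm{Id}_{Set^L}$ and $\Phi\circ\Psi\cong\mathrm{Id}_{Set_L}$, following the classical pattern behind the Heyting statement ``all three categories are equivalent'' recorded above, but keeping careful track of the Lukasiewicz product $\otimes$ at every point where the clauses defining $||x\in y||_L$ and $||x\subseteq y||_L$, and the relations $\alpha$, differ from their $\wedge$-versions.

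First I would define $\Phi$ on objects by sending a representative $u\in V^L$ to the $L$-set with underlying set $\mathrm{dom}(u)$ and
$$\alpha_u(s,t)=||s\in u||_L\otimes||t\in u||_L\otimes||s=t||_L ,$$
and then check that this satisfies the axioms of $Set_L$: symmetry is immediate, the bound $\alpha_u(s,t)\le\alpha_u(s,s)\wedge\alpha_u(t,t)$ follows from monotonicity of $\otimes$ together with $||s=t||\le||s=s||$, and the transitivity axiom $\alpha_u(x,y)\otimes(\alpha_u(y,y)\to\alpha_u(y,z))\le\alpha_u(x,z)$ follows from the $MV$-identity $a\otimes(a\to b)\le b$ and the forcing laws for $||{\cdot}={\cdot}||_L$ and $||{\cdot}\in{\cdot}||_L$ proved in the theorems above. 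On morphisms, an $f\in V^L$ with $||f\text{ is a function from }u\text{ to }v||=1$ restricts to a total, single-valued $L$-relation, hence a strict morphism $\Phi(f)$; functoriality holds because composition of functions in $V^L$ is computed by the same $\Sigma$-clause that defines composition of $L$-relations in $Set_L$, so it is preserved up to the identification $||{\cdot}={\cdot}||=1$.

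Next I would define $\Psi$ using the completion machinery already recalled: restricted to its Heyting reduct every $L$-set is isomorphic to a complete one, and for the $MV$ structure one upgrades the observation that a singleton of $(A,\alpha)$ is exactly a sub-presheaf of the presheaf on $L$ determined by $(A,\alpha)$; so ``complete $L$-set'' and ``element of the cumulative hierarchy taken up to $||{\cdot}={\cdot}||=1$'' package the same data. Concretely, given $(A,\alpha)$ I would build by transfinite recursion a name $\widehat A\in V^L$ whose domain consists of names for the singletons of $(A,\alpha)$, with $\widehat A(s)$ the extent of $s$, verify $||\widehat A\text{ is extensional}||=1$ by translating the singleton axioms through the $\otimes$-clause for $\subseteq$, and send a strict morphism of $L$-sets to (the $V^L$-name of) its graph.

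Finally, $\Psi\Phi\cong\mathrm{Id}_{Set^L}$ would come down to $||u=\widehat{\Phi(u)}||_L=1$ for every $u\in V^L$, which follows from extensionality of $u$ and the definition of $||x\in y||_L$; and $\Phi\Psi\cong\mathrm{Id}_{Set_L}$ is the statement that the $L$-set recovered from $\widehat A$ is canonically isomorphic to $(A,\alpha)$, i.e.\ the completion theorem. I expect the real obstacle to be bookkeeping rather than anything conceptual: since $\otimes$ and $\wedge$ genuinely differ in an $MV$ algebra, one must check that the $\otimes$-flavoured transitivity axiom of $Set_L$ corresponds precisely to the $\otimes$-clause in $||x\subseteq y||_L$, and in particular that the asymmetry flagged earlier ($||x\subseteq y||_L\le||x\subseteq y||_H$, not equality) does not break well-definedness of $\Phi$ or $\Psi$ on morphisms; this is exactly where the residuation law $a\otimes b\le c\iff a\le(b\to c)$ of the $MV$ structure has to carry the argument.
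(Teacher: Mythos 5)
Your overall plan is the one the paper follows: a forward functor sending a name $u\in V^L$ to the $L$-set on $\mathrm{dom}(u)$ whose degree of equality is the truth value of ``membership and equality'', and an inverse built from canonical names $\dot{x}$ with $\dot{x}(\dot{z})=\delta(x,z)$ and $X^*(\dot{x})=\delta(x,x)$. The paper's inverse uses only the \emph{principal} singletons $z\mapsto\delta(x,z)$ rather than routing through the full completion by all singletons as you do; that is a harmless variation, since an $L$-set is isomorphic to its completion, but it does make the verification of $\Phi\Psi\cong\mathrm{Id}$ a one-line computation instead of an appeal to the completion theorem. You are also right that the paper leaves the naturality checks unstated, so your intention to verify them explicitly is an improvement rather than a deviation.

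There is, however, one concrete defect in your forward functor. You set
$$\alpha_u(s,t)=\|s\in u\|_L\otimes\|t\in u\|_L\otimes\|s=t\|_L,$$
whereas the paper takes $\delta_u(s,t)=\|s\in u\ \land\ s=t\|_L$, with the membership value occurring \emph{once}. Because $\otimes$ is not idempotent in an $MV$ algebra (in $[0,1]$ with $a\otimes b=\max(0,a+b-1)$ one has $\tfrac12\otimes\tfrac12=0$), your diagonal $\alpha_u(s,s)=\|s\in u\|\otimes\|s\in u\|\otimes\|s=s\|$ can be strictly below, even annihilate, the intended extent $\|s\in u\|$. Since morphisms of $L$-sets are required to preserve extents ($\alpha(x,x)=\beta(f(x),f(x))$), the object $\Phi(u)$ you produce need not be isomorphic to the one recovered after the round trip, and $\Phi\Psi\cong\mathrm{Id}_{Set_L}$ fails for exactly the reason you feared: the $\otimes$-bookkeeping. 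The fix is simply to take the membership factor once (or use the lattice meet for the juxtaposition with $\|s=t\|$), i.e.\ to use the paper's $\|s\in u\land s=t\|$; with that change the rest of your argument, including the residuation step $a\otimes(a\to b)\le b$ for the transitivity axiom, goes through as you describe.
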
 
\begin{demo}{Proof} With each $u\in V^{L}$ we associate the $H$ set $\bar{u}=(dom(u), \delta_u)$ where $\delta_u(x,y)=||x\in u\land x=y||_L$ 
and if $u,v$ and $f\in V^{L}$ are such that 
$V^{L}\models f:u\to v$, we obtain an arrow $\bar{f}:\bar{u}\to \bar{v}$ by defining 
$\bar{f}(x,y)=||f(x)=y||_L$. This yields an equivalence with inverse defined as follows. 
Let $(X,\delta)$ be an $L$ set. For each $x\in X$ define $\dot{x}\in V^L$ by $dom(\dot{x})=\{\hat{z}: z\in X\}$ and $\dot{x}(\bar{z}=\delta(x,z)$. 
Define $X^*\in V^L$ 
by $dom(X^*)=\{\dot{x}: x\in X\}$ and $X^*(\dot{x})=\delta(x,x)$. 
Given an arrow $f:(X,\delta)\to (Y,\epsilon)$ 
in $Set_L$ define $f^*\in V^L$ by $dom(f^*)=\{(\dot{x}, \dot{y}^(L): x\in X, y\in Y\}$ and $f^{*}(\dot{x}, \dot{y})^{(H)}=f(x,y)$.
\end{demo}

We show that in fuzzy set theory we cannot prove that Zorn's lemma proves the axiom of choice.

\begin{theorem} Let $\B$ be an $MV$ algebra. 
Then Zorn's lemma holds in $V^B$, but axiom of choice is independependent from $ZF$ based on $V$.
\end{theorem}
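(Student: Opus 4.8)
\begin{demo}{Proof plan}
The two clauses will be handled separately, and the guiding principle is that the ground model $V$ satisfies $ZFC$ --- so Zorn's lemma and choice are both available externally --- whereas the internal logic of $V^{\B}$ (Lukasiewicz logic over $\B$) is genuinely non-classical, so that inside $V^{\B}$ the law of excluded middle fails; and it is precisely excluded middle that is needed to upgrade Zorn's lemma to full choice. We assume throughout that $\B$ is complete, as required for the valuations $\|\cdot\|$ of the preceding theorems to be defined.

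\emph{Zorn's lemma in $V^{\B}$.} The plan is to transfer it from $V$. Fix names $P$ and $\leq$ in $V^{\B}$ with $\|(P,\leq)\text{ is a poset in which every chain has an upper bound}\|=1$; we must produce $m\in V^{\B}$ with $\|m\in P\text{ and }m\text{ is maximal}\|=1$. Working externally in $V$, where $AC$ holds, fix a choice function and run the usual transfinite recursion building a strictly increasing transfinite sequence of names for elements of $P$: at a successor stage pick, via the choice function, a strictly larger element when one exists, and stop otherwise; at a limit stage pick an upper bound of the chain already built. The recursion must halt, since an unbounded sequence would yield an injection of $\mathrm{Ord}$ into a set computed from $\mathrm{dom}(P)$, impossible in the genuine $ZF$-model $V$; let $m$ be the terminal value. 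Using the forcing clauses and the formula-induction of the preceding theorems (giving $M[G]\models\phi\iff\|\phi\|\in G$), together with the instances of separation, replacement, pairing and power set already established for $V^{\B}$, one verifies step by step that each assertion in the recursion carries $\B$-value $1$. The one genuine subtlety is that ``chain'' and ``upper bound'' are themselves $\B$-valued predicates, and that an internal element of $P$ need not be literally a point of $\mathrm{dom}(P)$, so the termination and maximality bookkeeping must be carried out with $\B$-valued quantifiers throughout, in the spirit of the Fourman--Scott--Grayson treatment of maximal principles in Heyting-valued models. Note that this argument nowhere invokes $x\vee\neg x=1$.

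\emph{Failure of $AC$ in $V^{\B}$.} Here the plan is Diaconescu's argument. An $MV$ algebra is Boolean precisely when its monoid operation is idempotent --- equivalently, when $x\vee\neg x=1$ for all $x$ --- so a non-Boolean $\B$ contains $r$ with $r\vee\neg r\neq 1$; taking a name $\dot x$ with $\mathrm{dom}(\dot x)=\{\hat{0}\}$ and $\dot x(\hat{0})=r$, the closed formula $\phi:=(\hat{0}\in\dot x)$ has $\|\phi\|=r$, so $\|\phi\vee\neg\phi\|=r\vee\neg r\neq 1$ and $V^{\B}$ refutes the law of excluded middle. Now form, by separation inside $V^{\B}$, the sets $U=\{y\in\hat{2}:y=\hat{0}\vee\phi\}$ and $V=\{y\in\hat{2}:y=\hat{1}\vee\phi\}$, both of which $V^{\B}$ sees as inhabited; a choice function for $\{U,V\}$, granted by $AC$, yields $u\in U$ and $v\in V$, and, since $V^{\B}$ sees every member of $\hat{2}$ as $\hat{0}$ or $\hat{1}$ and sees $\hat{0}\neq\hat{1}$, a case analysis on $u,v$ decides $u=v$ and in every case delivers $\phi\vee\neg\phi$. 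This contradicts the previous paragraph, so $V^{\B}\not\models AC$ and hence $ZF\not\vdash AC$; since $ZFC$ is consistent relative to $ZF$ (witnessed by $L$), also $ZF\not\vdash\neg AC$, giving the asserted independence. Care is needed with the distinction between the lattice conjunction $\wedge$ and the Lukasiewicz $\otimes$: the set-theoretic identities underlying Diaconescu (transitivity of equality, hence $\hat{0}\neq\hat{1}$ as a genuine apartness) are the $\otimes$-versions, and one must check that the disjunctive-syllogism step deriving $\phi$ from $(\chi\vee\phi)$ and $\neg\chi$ for a crisp ($0/1$-valued) $\chi$ really is valid in Lukasiewicz logic --- which it is --- so that it is genuinely the failure of $\vee$-excluded-middle that is being contradicted.

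The main obstacle is the first clause: arranging the Zorn recursion so that \emph{maximality} acquires $\B$-value $1$ without smuggling in a choice principle stronger than what $V^{\B}$ is meant to validate. A chain in the sense of $V^{\B}$ need not be the canonical lift of a chain of $V$, an ``upper bound with value $1$'' need not be a single ground-model element, and the internal carrier of $P$ need not coincide with $\mathrm{dom}(P)$, so the termination-and-maximality argument cannot be quoted verbatim from $V$ but must be redone with $\B$-valued logic; moreover one must first pin down which classically equivalent form of Zorn's lemma actually transfers, since their equivalence is proved in $V$ by classical reasoning and should not be presumed to survive in $V^{\B}$. By comparison, the second clause --- once $r$ is fixed and the $\otimes$-versus-$\wedge$ check is carried out --- is routine.
\end{demo}
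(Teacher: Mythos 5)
Your plan for the Zorn half is the same transfer-from-$V$ idea as the paper's, but realized differently: where you run a transfinite recursion with an external choice function and defer the ``every assertion gets value $1$'' bookkeeping, the paper takes the \emph{core} $Y$ of the internal poset $(X,\leq_X)$ (the external poset of elements ordered by $y\leq_Y y'$ iff $\|y\leq_X y'\|=1$), applies ordinary Zorn in $V$ to $(Y,\leq_Y)$ to get a maximal $c$, and then verifies internal maximality of $c$ by an explicit auxiliary name: for each $a$ it builds the internal chain $V$ with $V(x)=\|x=a\wedge x\in X\wedge c\leq_X x\|\vee\|x=c\|$, takes its supremum $v$ in the core, and concludes $v=c$, hence $\|a\in X\wedge c\leq_X a\to a=c\|=1$. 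That auxiliary-chain device is precisely the step your outline labels ``the main obstacle'' and leaves open, so you should be aware that the published argument discharges it concretely rather than by a general recursion.

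For the failure of $AC$ the two routes genuinely diverge, and yours carries a real risk. The paper does not argue via excluded middle at all: it fixes the specific linear (hence $MV$) algebra of opens of $N^{op}$ and exhibits a name $K$ that is internally infinite but Dedekind finite, computing $\|\forall x\in K\,\exists y\in K\,x<y\|=\bigcap_m(O_m\Rightarrow O_{m+1})=\emptyset$; since $AC$ makes ``infinite'' and ``Dedekind infinite'' coincide, $AC$ fails. Your Diaconescu route needs two things the paper's own development does not supply. First, the earlier theorem establishes only extensionality, pairing, infinity and power set for $V^{L}$ (a restricted separation is used in its proof, but full separation in a contraction-free logic such as Lukasiewicz's is exactly the sort of schema that can fail or trivialize, and Diaconescu's argument uses $\phi$ more than once, so contraction-sensitivity is not a cosmetic worry). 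Second, every $MV$ algebra \emph{does} satisfy $\|\phi\oplus\neg\phi\|=1$ for the strong disjunction, so your contradiction only materializes if $AC$, as interpreted in $V^{\B}$, forces the \emph{lattice} disjunction $\phi\vee\neg\phi$ to get value $1$; you flag this but do not verify it, and it is the crux. The paper's Dedekind-finite construction sidesteps both issues at the cost of proving the failure only for one particular $\B$ (which suffices for the independence claim as stated). If you want to keep the Diaconescu route, you must pin down which separation instances $V^{\B}$ actually validates and track $\vee$ versus $\oplus$ through every disjunction in the argument.
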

\begin{demo}{Proof} Let $\B$ be the gven algebra. Assume that
$$V^{B}\models (X,\leq_X)$$ 
is a non empty partially ordered set in which every chain has a supremum. 
Let $Y$ be the core of $X$ and define $\leq_Y$ on $Y$, by $y\leq_Yy'$ if
$||y\leq_X y'||=1$. Then $(Y,\leq_Y)$ is a partially ordered set in which every chain 
has an upper bound. So by Zorn's lemmm, which holds in $V$, $Y$ has a maximal element $c$.
We claim that
We claim that
$$||c \text { is a maximal element } of X||=1.$$
Take any $a\in V^B$ and define $V\in V^{B}$ by $dom(V)=dom(X)$ and 
$$V(x)=||x=a\land x\in X\land c\leq_X x||\lor ||x=c||.$$
Then $V^B\models V$ is a chain in $X$, so there is a $v\in Y$ such that
$$V^{H}\models v\text { is the supremum of } V.$$
Since $||c\in V||=1$, it follows that $||c\leq_X v||=1$ 
when $c\leq_Y v$ so that $v=c$ by maximality of $c$. This yield $$||a\in V\to a\leq_Xc||=1$$ and
$$||a\in V\to c\leq_X x||=1$$
Therefore
$$||a\in V\to a=c||=1.$$
Then
$$||a\in X\land c\leq_X a||\leq ||a\in V||.$$
Define the set $K\in V^{H}$ by $Dom(K)=\{\hat{p}:p\in K\}$ and $K(p)=O_p$. Then in $V^{H},$ $K$ is a subset of $\bar{P}$, and for $p\in P$,
$||\hat{p}\in K||=O_p$. 

Now take $P=N^{op}$, and the Heyting algebra $O(N^{op})$. Not every Heyting algebra is an $MV$ algebra 
but this one is, because it is linear. 
We will show that the $H$ valued set $K$ is infinite, but Dedekind finite. We have
$V^{H}\models K\subseteq \hat{N}$ and $V^{H}\models \neg n\in \hat{N}, n\in K$. But then in $V^H$ if for all $n\in N$, $n\in K$, 
then $K$ is not finite, so of $K$ were finite $\neg \forall n\in N$, $n\in K$, and so $\neg (\forall n\in N$ $n\in K)$.
We now dhow that in $V^H$ $K$ is Dedekind finite. If not, i.e if there existed an injection of $N$ into $K$, 
then the sentence $$\forall x\in K\exists y\in K x<y$$ would hold in $V^B$. But
$$||\forall x\in K\exists y\in K x<y||=\bigcap[O_m\implies \bigcup[O_n\cap ||m<n||]$$
$$=\bigcap[O_m\implies \bigcup_{m<n}O_n]$$
$$=\bigcap_{m}[O_m\implies O_{m+1}]$$
$$=\bigcap_{m}O_{m+1}=\emptyset.$$ 
\end{demo}
Using Zorn's lemma one can show that the ultrafilter theorem (every filter is contained in a maximal filter) 
holds in fuzzy  set theory. This means that all representation theorems in algebraic logic, like representability of locally finite algebras hold.

\section{Sheaf Duality and Epimorphisms}

For an algebra $\A$ and $X\subseteq \A$, $\Ig^{\A}X$ is the ideal generated by $X$. We write briefly lattice for a $BLO$; 
hopefully no confusion is likely to ensue.

\begin{definition}
\begin{enumarab}
\item  A lattice $L$ is regular if whenever $x$ is a prime ideal in $\Zd L$, then $\Ig^{L}\{x\}$ is a prime ideal in $L$.

\item A lattice $L$ is strongly regular, if whenever $x$ is a maximal ideal in $Zd \L$, then $\Ig^{I}\{x\}$ is a maximal ideal in $L$.

\item A lattice $L$ is congruence strongly regular, if whenever $x$ is a maximal ideal in $Zd\L$, then $\Co^{L}\{x\}$ is a maximal congruence of $L$.
\end{enumarab}
\end{definition}
If $L$ is not relatively complemented, then (2) and (3) above are not equivalent; but if it is relatively complemented then they are equivalent. 
A lattice with the property that every interval is complemented is called a relatively complemented lattice. 
In other words, a relatively complemented lattice is characterized by the property that for every element $a$ in an interval $[c,d]=\{x: c\leq x\leq d\}$
there is an element $b$, such that $a\lor b=d$ and $a\land b=c$. 
Such an element is called a complement; it may not be unique, but if the lattice is bounded then relative complements in $[a, 1]$ are just 
complements, and in case of distributivity such complements are 
unique.
In arbitrary lattices the lattice of ideas may not be isomorphic to the lattice of congruences, the 
following theorem gives a sufficient and necessary condition for this to hold.
The theorem is  a classic due to Gratzer and Schmidt.

\begin{theorem} For the correspondence between congruences and ideals to be an 
isomorphism it is necessary and sufficient that $L$ is distributive, relatively complemented with a minimum $0$.
\end{theorem}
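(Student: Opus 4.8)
The plan is to study the two canonical isotone maps between the ideal lattice $\mathrm{Id}(L)$ and the congruence lattice $\mathrm{Con}(L)$ and to pin down exactly when they are mutually inverse. For an ideal $I$ let $\Theta(I)$ be the least congruence of $L$ whose $0$-class contains $I$, and for a congruence $\theta$ let $\kappa(\theta)=0/\theta$, which is always an ideal. Both maps are monotone, and in every lattice with $0$ one has $\Theta(\kappa(\theta))\le\theta$ and $I\subseteq\kappa(\Theta(I))$. So the whole content of the theorem is: under distributivity and relative complementation these become equalities (sufficiency), and each of the three hypotheses is needed (necessity).

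I would first record the explicit form of $\Theta(I)$ in the distributive case: $a\equiv b\ (\Theta(I))$ iff $a\vee i=b\vee i$ for some $i\in I$; distributivity is used only to verify compatibility with $\wedge$, via $(a\wedge d)\vee i=(a\vee i)\wedge(d\vee i)$, and minimality is clear. Consequently $0/\Theta(I)=\{a:a\le i\text{ for some }i\in I\}=I$, so $\kappa\circ\Theta=\mathrm{id}$ whenever $L$ is distributive with $0$; in particular $\Theta$ is then injective and $\kappa$ surjective, and the only remaining point is whether $\Theta\circ\kappa=\mathrm{id}$, i.e. whether every congruence is generated by its $0$-class. For sufficiency, assume $L$ distributive, relatively complemented, with $0$ (note this is equivalent, given distributivity and a $0$, to sectional complementedness, since a complement in $[0,d]$ can be lifted to $[c,d]$ by joining with $c$). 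Given $\theta$ with $I=0/\theta$, and $u\equiv v\ (\theta)$ with $u\le v$, pick a complement $c$ of $u$ in $[0,v]$; then $c=c\wedge v\equiv c\wedge u=0$, so $c\in I$, while $u\vee c=v=v\vee c$, hence $u\equiv v\ (\Theta(I))$. Thus $\theta\le\Theta(I)$, so $\theta=\Theta(\kappa(\theta))$, and $\Theta,\kappa$ are mutually inverse isotone maps — a lattice isomorphism.

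For necessity, suppose the correspondence is an isomorphism; in particular $\Theta$ is surjective. Since $\mathrm{Con}(L)$ has a least element, so does $\mathrm{Id}(L)$, and its least member, being contained in every principal ideal, is $\{0\}$ for a minimum $0$ of $L$. Since $\mathrm{Con}(L)$ is distributive (Funayama--Nakayama) and $\mathrm{Id}(L)$ is distributive exactly when $L$ is, $L$ is distributive. For relative complementedness I would compute the principal congruences: for $x\le a$ in a distributive lattice, $u\equiv v\ (\Theta(x,a))$ iff $u\vee a=v\vee a$ and $u\wedge x=v\wedge x$ — the right side is the intersection of the kernels of $u\mapsto u\vee a$ and $u\mapsto u\wedge x$, it identifies $x$ with $a$, and it is least among such because for $u\le v$ one has $v\wedge(u\vee x)=u$ and $v\wedge(u\vee a)=v$, so substituting $a$ for $x$ collapses $u$ onto $v$ in any congruence that collapses $(x,a)$. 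Hence $0/\Theta(x,a)=\{u\le a:u\wedge x=0\}=:I_{x,a}$, an ideal, and $x\equiv a\ (\Theta(I_{x,a}))$ holds iff some $i\in I_{x,a}$ satisfies $x\vee i=a$, i.e. iff $x$ has a complement in $[0,a]$. Now surjectivity plus injectivity of $\Theta$ force $\Theta(x,a)=\Theta(\kappa(\Theta(x,a)))=\Theta(I_{x,a})$, so every $x\le a$ has a complement in $[0,a]$; thus $L$ is sectionally, hence relatively, complemented.

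The main obstacle is this last point, the necessity of relative complementedness: everything turns on having the exact description of the principal congruences $\Theta(x,a)$ in a distributive lattice and on correctly identifying $0/\Theta(x,a)$, since only then can the absence of one relative complement be converted into the statement that a concrete congruence lies outside the image of $\Theta$. The sufficiency direction and the auxiliary inputs — congruence distributivity and "$\mathrm{Id}(L)$ distributive iff $L$ distributive" — are routine or classical, and I would invoke Funayama--Nakayama and the standard ideal-lattice fact rather than reprove them.
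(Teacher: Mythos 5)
Your proof is correct, and on the crucial point — extracting relative complements from the hypothesis — it follows the same line as the paper's sketch: both take the principal congruence $\Theta(x,a)$ for $x\le a$, identify its $0$-class $I_{x,a}=\{u\le a:u\wedge x=0\}$, and use the fact that $\Theta(x,a)$ must equal the congruence generated by making $I_{x,a}$ a $0$-class to produce $i\in I_{x,a}$ with $x\vee i=a$, i.e.\ a complement of $x$ in $[0,a]$. Where you genuinely diverge is distributivity: the paper invokes the classical fact that a lattice is distributive iff every ideal is a congruence class of some congruence, whereas you transport distributivity of $\mathrm{Con}(L)$ (Funayama--Nakayama) across the isomorphism to $\mathrm{Id}(L)$ and then down to $L$ via the principal-ideal embedding; both are legitimate, yours trading a sharper lattice-theoretic input for a universal-algebraic one, at the mild cost of needing the isomorphism to be order-theoretic (which it is, since your $\Theta$ and $\kappa$ are monotone and mutually inverse). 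You also supply two things the paper leaves implicit: an actual proof of sufficiency (the computation $c=c\wedge v\equiv c\wedge u=0$ placing the relative complement in $0/\theta$, whereas the paper disposes of this direction in one sentence), and verifications of the explicit descriptions of $\Theta(I)$ and of principal congruences in distributive lattices rather than bare citations. The only caveat is interpretive, not mathematical: the hypothesis ``the correspondence is an isomorphism'' must be read as saying that $\Theta$ and $\kappa$ are mutually inverse (equivalently, every ideal is the $0$-class of exactly one congruence), which is also how the paper reads it; under that reading your argument is complete.
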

\begin{proof} {\bf Sketch} Clearly the ideal corresponding to the identity relation is the $0$ ideal. 
Since every ideal of $L$ is a congruence class under some homomorphism, we obtain distributivity.
To show relative complementedness,  it suffices to show that if $b<a$, then $b$ has a complement in the interval $[0,a]$. 
Let $I_{a,b}$ be the ideal which consists of all $u$ with 
$u\equiv 0(\Theta_{a,b})$. $V_{a,b}$ is 
a congruence class under precisely one relation, hence $a\equiv b mod(\Theta[V_{a,b}])$.  Hence for some $v\in I_{a,b}$ we have 
$b\lor v=a$ and $b\land v=0$. 
Conversely, we have every ideal is a congruence class under at most one congruence relation, and of course under at least one.
\end{proof}

In case of relative complementation, we have
\begin{theorem} The following conditions are equivalent
\begin{enumarab}
\item $L$ is strongly regular
\item Every principal ideal of $L$ is generated by a an element in $\Zd L$
\item $\delta(L)$ is semisimple
\end{enumarab}
\end{theorem}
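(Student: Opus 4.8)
The plan is to establish $(1)\Leftrightarrow(3)$ and $(1)\Leftrightarrow(2)$ separately, invoking relative complementation only through the Gratzer--Schmidt theorem recorded above: it identifies the ideal lattice of $L$, and of each stalk $\G_x$, with the corresponding congruence lattice, so that ``maximal ideal'' and ``simple quotient'' are interchangeable, and (prime ideals of the bounded relatively complemented lattice $\Zd L$ being maximal) the base space $X$ of the sheaf $(X,\delta(L))$ consists of maximal ideals of $\Zd L$. With this dictionary, strong regularity is exactly the assertion that $\G_x=L/\Ig^{L}(x)$ is simple for every $x\in X$.

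$(1)\Leftrightarrow(3)$: if $L$ is strongly regular then $\delta(L)=\prod_{x\in X}\G_x$ is literally a subdirect product of simple algebras, hence semisimple. For the converse I would first show that semisimplicity of the product forces each factor to be semisimple, using the complemented idempotents $e_x\in\prod_{y}\G_y$ ($1$ in coordinate $x$ and $0$ elsewhere): a nonzero radical element $r$ of some $\G_x$ would lift to the nonzero radical element of $\prod_y\G_y$ that is $r$ in coordinate $x$ and $0$ elsewhere. Then, for $x$ maximal in $\Zd L$, the zero-dimensional part of $\G_x$ is just the two-element lattice (the image of $\Zd L\to\G_x$, whose kernel is $x$), and a short argument on this structure upgrades ``semisimple'' to ``simple''; thus each $\Ig^{L}(x)$ is maximal in $L$, which is $(1)$.

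$(2)\Rightarrow(1)$ is purely algebraic: fix $x$ maximal in $\Zd L$ and a proper ideal $J\supseteq\Ig^{L}(x)$ of $L$. Then $J\cap\Zd L$ is a proper ideal of $\Zd L$ containing $x$, so $J\cap\Zd L=x$. For $a\in J$ choose $z\in\Zd L$ with $\Ig^{L}\{a\}=\Ig^{L}\{z\}$ by (2); then $z\in\Ig^{L}\{a\}\subseteq J$, hence $z\in x$ and $a\in\Ig^{L}\{z\}\subseteq\Ig^{L}(x)$. So $J=\Ig^{L}(x)$ is maximal.

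$(1)\Rightarrow(2)$, which I expect to be the main obstacle. From $(1)$ one first gets that $L$ is semisimple, since $\bigcap_{x\in X}\Ig^{L}(x)=\{0\}$ by injectivity of the representation $\eta$, exhibiting $L$ as a subdirect product of the simple stalks; in particular every ideal of $L$ is an intersection of maximal ideals $\Ig^{L}(x)$. Now fix $a\in L$ and set $\mathfrak a=\Ig^{L}\{a\}\cap\Zd L$, an ideal of $\Zd L$. It remains to prove (i) $\Ig^{L}\{a\}=\Ig^{L}(\mathfrak a)$, i.e.\ that this principal ideal is determined by its zero-dimensional part, which should follow from semisimplicity of $L$ together with a Priestley-space computation comparing $\{x\in X:a\notin\Ig^{L}(x)\}$ with the set carved out by $\mathfrak a$; and (ii) $\mathfrak a$ is itself principal in $\Zd L$, generated by some $z\in\Zd L$ --- whence $\Ig^{L}\{a\}=\Ig^{L}(\mathfrak a)=\Ig^{L}\{z\}$. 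Step (i) should be routine given semisimplicity, but step (ii) is the real difficulty: relative complementation is what forces $\Ig^{L}\{a\}\cap\Zd L$ to be principal rather than merely an ideal, and with distributivity alone one seems to need in addition that the dimension set $\Delta a$ is finite, so that a finite meet of the relative complements supplied by the Gratzer--Schmidt argument produces $z$. This finiteness is automatic in the locally finite and neat-reduct examples treated above, and is the precise point where a hypothesis beyond ``$BLO$ plus relative complementation'' may be required.
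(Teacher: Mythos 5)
The paper offers no proof to compare against---its ``proof'' is the single word ``Easy''---so I am judging your argument on its own merits. Your $(2)\Rightarrow(1)$ is correct and complete: since $x\subseteq \Zd L$ consists of fixed points of the operators, $\Ig^{L}(x)=\{b: b\leq s \text{ for some } s\in x\}$ is a proper ideal, and your computation shows that any proper ideal of $L$ containing it must equal it. The implication $(1)\Rightarrow(3)$ is also fine (simple stalks make $\prod_{x}\G_x$ a subdirect product of simple algebras), as is your reduction of semisimplicity of the product to semisimplicity of each factor via the elements supported at a single coordinate.

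The other two implications contain genuine gaps, and you have correctly located the harder one yourself. In $(3)\Rightarrow(1)$, the step ``semisimple plus two-element zero-dimensional part implies simple'' is not a short argument: first, $\Zd(\G_x)$ need not coincide with the image of $\Zd L$ in $\G_x$ (the relationship between the zero-dimensional part of a quotient and the quotient of the zero-dimensional part is exactly what the paper's ultraproduct example is meant to illustrate); second, in the degenerate case $\Zd L=\{0,1\}$ the sheaf has a single stalk $\G_x=L$, so the asserted upgrade \emph{is} the full statement $(3)\Rightarrow(1)$, and invoking it is circular. For general operator algebras, ``$\Zd L=\{0,1\}$'' and ``simple'' are different conditions (a full weak set algebra over an infinite base has trivial zero-dimensional part yet carries proper nonzero ideals), so some further input is required. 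As for $(1)\Rightarrow(2)$, your diagnosis is sound: condition (2) forces, for each $a$, some finite composite of the $f_i$ applied to $a$ to land in $\Zd L$, which is a local-finiteness-type constraint that a hypothesis about maximal ideals of $\Zd L$ does not visibly impose; and neither of your steps (i), (ii) follows from semisimplicity alone, since semisimplicity gives $\bigcap_{x}\Ig^{L}(x)=\{0\}$ but not that every ideal is an intersection of maximal ones. Note that $(1)\Leftrightarrow(2)$ does become genuinely easy if ``strongly regular'' is replaced by Comer's stronger regularity condition---every ideal $J$ of $L$ satisfies $J=\Ig^{L}(J\cap \Zd L)$---for then $a\leq z_1\vee\dots\vee z_n$ with each $z_i\in \Ig^{L}\{a\}\cap\Zd L$, and $z=\bigvee z_i$ generates $\Ig^{L}\{a\}$. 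That is very likely the statement the paper intends; as literally stated, the theorem is not ``Easy'', and your proposal, while honest about where it breaks down, does not close these gaps.
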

\begin{proof} Easy
\end{proof}

Our next example shows that semisimple algebras may not be regular, 
and stalks are not necessarily subdirectly indecomposable.

\begin{example}
Let $\C$ be a subdirectly indecomposable cylindric algebra of dimension $\alpha$. Let $I$ be the set of all finite subsets 
of subsets of $\alpha$. Let $F$ be an ultraflier on $I$ such that 
$X_{\Gamma}=\{\Delta\in I:\Gamma\subseteq \Delta\}\in F$ for all $\Gamma\in I$. Then the surjective homomorphism 
${}^I\C/ \to  {}^I \C/F$ induced by $F$ maps zero dimensional elements, to elements that are not zero dimensional.
\end{example}

We push the duality a step futher establishing a correspondence between open (closed) sets of $BLO$s and open subsets of its dual.
Recall that an ideal $I$ in $\A$ is regular if $\Ig^{\A}(I\cap \Zd\A)=I$.

\begin{theorem} There is an isomomorphism between the set of all regular ideals in $\Gamma(X, \delta)$ 
onto the lattice of open subsets of $X.$
\end{theorem}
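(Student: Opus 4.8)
The plan is to exhibit the isomorphism explicitly and check it is well-defined and order-preserving in both directions. Given a regular ideal $I$ of $\Gamma(X,\delta)$, transport it through the isomorphism $\eta:\A\cong\Gamma(X,\delta)$ (established earlier via $a\mapsto\sigma_a$) to a regular ideal of $\A$; since $I$ is regular it is generated by $I\cap\Zd\A$, which is an ideal of the distributive lattice $\Zd\A$. Now invoke the duality for $\Zd\A$ already available in the excerpt (the Priestly/Stone functor $F:V\to Pries$, or equivalently Theorem in \S4 with co-separator $C=2$): ideals of $\Zd\A$ correspond bijectively and order-isomorphically to open subsets of $X=Spec(\Zd\A)$, an ideal $J\subseteq\Zd\A$ going to $\bigcup_{a\in J}N_a$ where $N_a=\{x\in X: a\notin x\}$. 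So define $\Phi(I)=\bigcup\{N_a: a\in I\cap\Zd\A\}$. The inverse $\Psi$ sends an open set $U\subseteq X$ to the ideal of $\Gamma(X,\delta)$ generated by $\{\sigma_a: a\in\Zd\A,\ N_a\subseteq U\}$; one must check this is a regular ideal, which is immediate since it is by construction generated by zero-dimensional sections.

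The key steps, in order: (1) recall $\Zd\A$ is a bounded distributive lattice and that the basic opens of $X$ are exactly the $N_a$, $a\in\Zd\A$, with $N_{a\land b}=N_a\cap N_b$, $N_{a\lor b}=N_a\cup N_b$, $N_0=\emptyset$, $N_1=X$; (2) show $J\mapsto\bigcup_{a\in J}N_a$ is an order isomorphism from the ideal lattice of $\Zd\A$ onto the open-set lattice of $X$ — surjectivity because every open set is a union of basic opens and the preimage ideal is $\{a\in\Zd\A: N_a\subseteq U\}$, injectivity because $N_a\subseteq\bigcup_{b\in J}N_b$ together with quasi-compactness of $N_a$ (it is clopen in the Priestly topology) forces $a$ into the ideal $J$; (3) show that the map $I\mapsto I\cap\Zd\A$ is an order isomorphism from regular ideals of $\Gamma(X,\delta)\cong\A$ onto ideals of $\Zd\A$, with inverse $J\mapsto\Ig^{\A}J$ — here regularity of $I$, i.e. $\Ig^{\A}(I\cap\Zd\A)=I$, is precisely what makes both composites the identity, and one checks $\Ig^{\A}J\cap\Zd\A=J$ using that the operators $f_i$ preserve joins and fix $\Zd\A$ pointwise; (4) compose (2) and (3) and observe all maps respect inclusion, hence the composite is a lattice isomorphism onto the open subsets of $X$.

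The main obstacle I expect is step (3): verifying that $\Ig^{\A}J\cap\Zd\A=J$ for an ideal $J$ of $\Zd\A$, i.e. that generating an ideal upward in $\A$ and then intersecting back down with the zero-dimensional part recovers exactly $J$. An element of $\Ig^{\A}J$ has the form $x\le a_1\lor\cdots\lor a_n$ with $a_k\in J$ (modulo the closure of $\A$'s ideals under the $f_i$), and if moreover $x\in\Zd\A$ one needs $x\in J$; this uses that each $f_i$ is idempotent, order-preserving, join-preserving and $f_i(a_k)=a_k$, so applying $f_i$ to the inequality keeps the right-hand side in $J$ while fixing $x$, but one must be careful that the ideal $\Ig^{\A}J$ is closed under the $f_i$ (which holds by the $BLO$ ideal definition combined with $\Delta(a\lor b)\subseteq\Delta a\cup\Delta b$ guaranteeing dimension sets stay finite). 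Everything else — the basic-open calculus, quasi-compactness from the Priestly/Stone structure, and order-preservation — is routine and can be cited from \S3–\S4.
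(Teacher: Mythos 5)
Your proof is correct, but it takes a different route from the paper's. The paper works directly with sections: it defines the support $[\sigma]=\{x\in X:\sigma(x)\neq 0_x\}$, sends an open set $U$ to $J[U]=\{\sigma\in\Gamma(X,\delta):[\sigma]\subseteq U\}$, sends an ideal $J$ to $U[J]=\bigcup\{[\sigma]:\sigma\in J\}$, and asserts these are mutually inverse; the entire burden of the argument (that $[\sigma]$ is open, that $J[U]$ is a regular ideal, that the composites are the identity) is left implicit. You instead factor the correspondence through the zero-dimensional part: regular ideals of $\Gamma(X,\delta)\cong\A$ correspond to ideals of $\Zd\A$ via $I\mapsto I\cap\Zd\A$ and $J\mapsto\Ig^{\A}J$, and ideals of the distributive lattice $\Zd\A$ correspond to open subsets of $X=Spec(\Zd\A)$ by the classical Stone/Priestley correspondence $J\mapsto\bigcup_{a\in J}N_a$. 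On regular ideals the two maps coincide, since $[\sigma_a]=N_a$ for $a\in\Zd\A$. Your decomposition buys two things: it makes completely explicit that regularity is exactly the hypothesis that inverts $I\mapsto I\cap\Zd\A$ (and your verification that $\Ig^{\A}J\cap\Zd\A=J$, using that the $f_i$ preserve joins and fix the generators, is the right argument and is genuinely needed); and it makes openness of the image manifest, since $\Phi(I)$ is a union of basic opens, whereas the paper's $[\sigma]$ for an arbitrary section requires a separate argument. The paper's version is shorter and intrinsic to the sheaf, and in principle applies to ideals described by arbitrary sections rather than only zero-dimensional ones, but as written it conceals the compactness and regularity steps that you correctly identify as the substance of the proof.
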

\begin{proof} For $\sigma\in \Gamma(X,\delta),$ let $[\sigma]=\{x\in X: \sigma(x)\neq 0_x\}$. For $U\subseteq X$, let 
$J[U]=\{\sigma\in \Gamma(X, \delta): [\sigma]\subseteq U\}.$ 
Then $J\mapsto U[J]$ is an isomorphism, its inverse is $U[J]=\bigcup\{[\sigma]: \sigma\in J\}.$
\end{proof}

Note that a simple lattice is necessarily strongly regular (and hence regular), but the converse is not true, even in the case of strong regularity.
There are easy examples.

As an application to our duality theorem established above, we can show that certain properties can extend from simple structures to 
strongly regular ones, equivalently some properties that {\it do not} hold for strongly regular algebras, do not hold 
in the proper subclass of simple ones. In simple algebras the Stone space of $Zd\A$ is the two element Boolean algebra,
and any ideal in $\A$ is also just the two element Boolean algebra.
So trivially if one starts with an ideal in $\A$, restricts it to $\Zd\A$, and then lifts it to $\A$, then he gets back where he started.
In  strongly regular algebras, this happens with 
any {\it ideal} of $\A$.

The natural question that bears an answer is how far are strongly regular algebras from simple algebras; 
and the answer is: pretty  far. 
For example in cylindric algebras any non-complete theory $T$ in a first order language gives 
rise to a strongly regular $\omega$-dimensional algebra, namely, $\Fm_T$, that is not simple.
In addition, in localy finite algebras, {\it every} ideal is regular.(Note that here all notions of regularity coincide).


$ES$ abreviates that epimorphisms (in the categorial sense) are surjective. Such abstract property 
is equivalent to the well-known Beth definability property 
for many abstract logics, including fragments of first order logic, and multi-modal logics. 
In fact, it applies to any algebraisable logic (corresponding to a quasi-variety) regarded
as a concrete category.  This connection was established by N\'emeti. 
As an application, to our hitherto established duality, we have: 

\begin{theorem} Let $V$ be a class of distributive bounded lattices such that the simple lattices in $V$ 
have the amalgamation property $(AP)$.
Assume that there exist strongly regular $BLO$s $\A,\B\in V$ and an epimorphism $f:\A\to \B$ that is not onto.
Then $ES$ fails in the class of simple lattices.
\end{theorem}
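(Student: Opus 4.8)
The plan is to exploit the duality machinery developed above to transfer the failure of $ES$ from the strongly regular level down to the simple level, using the sheaf representation as the bridge. First I would recall that by the previous theorem (the characterization of strong regularity via relative complementation), for strongly regular $\A \in V$ the dual sheaf $(X(\A), \delta(\A), \pi)$ has semisimple $\delta(\A)$, so the stalks $\G_x = \A/\Ig^{\A}x$ are simple lattices, and these simple stalks all lie in $V$ (being homomorphic images, hence members of the quasivariety). The epimorphism $f \colon \A \to \B$ that is not onto dualizes, via the contravariant sheaf functor, to a sheaf morphism $f^d = (f^*, f^0) \colon \B^d \to \A^d$ which is a monomorphism in the dual category but, crucially, is not an isomorphism onto a restriction --- this is the dual manifestation of $f$ failing to be surjective.

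The key step is then to localize this failure at a single point of the spectrum. Since $f$ is not onto, there is some $b \in B \setminus f[A]$, and $\B$ being strongly regular means $\B$ is (up to the isomorphism of $\Gamma$) the algebra of continuous sections of its sheaf; so there must be a point $x \in X(\B)$ at which the induced stalk map $f^0_x \colon \G_{f^*(x)}^{\A} \to \G_x^{\B}$ between the corresponding simple stalks is not surjective. (If every stalk map were surjective, a glueing/patching argument using the sheaf axioms --- the separation and amalgamation conditions in Definition of a sheaf --- would force $f$ itself to be surjective, contradicting the hypothesis.) I would next check that $f^0_x$ is still an epimorphism in $V$: epimorphism-ness is preserved under the functorial passage because the Yoneda-type contravariant duality sends epis to monos and the localization $\B \to \G_x^{\B}$ is itself surjective (hence epi), and composing/cancelling appropriately shows $f^0_x$ is epi. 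Thus $f^0_x \colon \G_{f^*(x)}^{\A} \to \G_x^{\B}$ is a non-surjective epimorphism between two simple lattices in $V$.

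Finally, the amalgamation hypothesis enters: the statement invokes that the simple lattices in $V$ have $(AP)$, and the point of this is that $(AP)$ together with the non-surjective epimorphism we have just produced at the simple level is exactly the configuration that witnesses $ES$ failing among simple lattices --- indeed one uses the standard equivalence (in the style of N\'emeti's characterization cited above) that in a category with amalgamation, $ES$ holding is tied to a pushout/amalgam criterion, and a genuine non-surjective epi between simple objects shows that criterion cannot be met. Hence $ES$ fails in the class of simple lattices in $V$.

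The main obstacle I anticipate is the localization step: extracting, from a global non-surjective epimorphism $f$, a single stalk $x$ at which $f^0_x$ is non-surjective. This requires carefully using that strong regularity makes $\delta(\A)$ semisimple and that $\B \cong \Gamma(X(\B), \delta(\B))$, so that a section failing to lie in the image of $f$ must fail "pointwise" somewhere; the sheaf glueing axioms are what prevent the failure from being invisible at every stalk. Verifying that the localized map remains an epimorphism (not merely a morphism) is the secondary delicate point, since epimorphism is a categorical, not element-wise, notion and must be tracked through the contravariant duality rather than checked directly.
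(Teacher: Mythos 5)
Your overall strategy --- localize the non-surjectivity of $f$ at a single stalk so as to produce a non-surjective epimorphism between simple lattices --- is attractive, and your second step is essentially sound: writing $p\colon\A\to\G_{h(x)}$ and $q\colon\B\to\G_x$ for the quotient maps, one has $k_x\circ p=q\circ f$, and since $q\circ f$ is an epimorphism so is $k_x$; this is even more direct than the one-point-sheaf argument the paper uses for the same purpose. The genuine gap is in the localization step, which you yourself flag as the main obstacle; your proposed fix (``the sheaf glueing axioms prevent the failure from being invisible at every stalk'') does not close it. The correct patching statement is: \emph{if the base map $h\colon Y\to X$ of the dual sheaf morphism is injective and every stalk map $k_x$ is surjective, then $f$ is surjective.} Injectivity of $h$ is indispensable: if $h(x_1)=h(x_2)=z$ with $x_1\neq x_2$, then to realize a section $\sigma$ of $\B^d$ in the image of $\Gamma(f^d)$ one must find a \emph{single} $t\in\L_z$ with $k_{x_1}(t)=\sigma(x_1)$ and $k_{x_2}(t)=\sigma(x_2)$; surjectivity of $k_{x_1}$ and $k_{x_2}$ separately provides two preimages but not a common one. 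So all stalk maps can be onto while $f$ is not, and the implication ``$f$ not onto $\Rightarrow$ some stalk map not onto'' is exactly what you have not proved.

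This is also where you have misplaced the amalgamation hypothesis. In the paper, $(AP)$ for simple lattices is used precisely to prove that $h$ is injective: if $h(x)=h(y)$, one amalgamates the simple stalks $\G_x$ and $\G_y$ over the simple $\L_{h(x)}$ into a simple $\D$, builds two sheaf morphisms from the one-point sheaf $(1,\D)$, and uses that $H=f^d$ is a monomorphism to conclude $x=y$. By contrast, your final step does not need $(AP)$ at all: a non-surjective $V$-epimorphism between simple lattices already witnesses the failure of $ES$ in the full subcategory of simple lattices, with no pushout or amalgam criterion required. Note finally that the paper argues by contradiction: it \emph{assumes} $ES$ for simples, uses that assumption to upgrade ``each $k_x$ is an epimorphism'' to ``each $k_x$ is onto,'' and then patches --- using injectivity of $h$ together with compactness of the Boolean base spaces --- to conclude that $f$ is onto, a contradiction. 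If you wish to keep your direct formulation, you must first establish injectivity of $h$ from $(AP)$ and the monomorphism property of $f^d$, and only then run the localization; as written, the argument does not go through.
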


\begin{demo}{Proof} Suppose, to the contrary that $ES$ holds for simple algebras.
Let $f^*:\A\to \B$ be the given epimorphism that is not onto.  We work in the dual space. We assume that $\A^d=(X,\L)$ and $\B^d=(Y,\G)$ 
are the corresponding dual sheaves over the Priestly  spaces $X$ and $Y$ and by  duality that 
$(h,k)=H:(Y,\G)\to (X,\L)$ is a monomorphism. Recall that $X$ is the set of prime ideals in $Zd\A$, and similarly for $Y$.
We shall first prove
\begin{enumroman}
\item $h$ is one to one
\item for each $y$ a maximal ideal in $\Zd\B$, $k(y,-)$ is a surjection of the stalk over $h(y)$ onto the stalk over $y$.
\end{enumroman}
Suppose that $h(x)=h(y)$ for some $x,y\in Y$. Then $\G_x$, $\G_y$ and $\L_{hx}$ are simple algebra, 
so there exists a simple $\D\in V$ and monomorphism $f_x:\G_x\to \D$ and $f_y:\G_y\to \D$ such that
$$f_x\circ k_x=f_y\circ k_y.$$
Here we are using that the algebras considered are strongly regular, and that the simple algebras have $AP$.
Consider the sheaf $(1,D)$ over the one point space $\{0\}=1$ and sheaf morphisms 
$H_x:(\lambda_x,\mu):(1,D)\to (Y,\G)$ and $H_y=(\lambda_y, v):(1,D)\to (Y,\G)$
where $\lambda_x(0)=x$ $\lambda_y(0)=y $ $\mu_0=f_x$ and $v_0=f_y$. The sheaf $(1,\D)$ is the space dual to $\D\in V$
and we have $H\circ H_x=H\circ H_y$. Since $H$ is a monomorphism $H_x=H_y$ that is $x=y$.
We have shown that $h$ is one to one.
Fix $x\in Y$. Since, we are assuming that  $ES$ holds for simple algebras of $V,$ in order to show that 
$k_x:\L_{hx}\to \G_x$ is onto, it suffices to show that $k_x$ is an epimorphism.  
Hence suppose that $f_0:\G_x\to \D$ and $f_1:\G_x\to \D$ for some simple $\D$ such that $f_0\circ k_x=f_1\circ k_x$.
Introduce sheaf morphisms 
$H_0:(\lambda,\mu):(1,\D)\to (Y,\G)$ and $H_1=(\lambda,v):(1,\D)\to (Y,\G)$
where $\lambda(0)=x$, $\mu_0=f_0$ and $v_0=f_1$. Then $H\circ H_0=H\circ H_1$, 
but $H$ is a monomorphism, so we have $H_0=H_1$ from which 
we infer that $f_0=f_1$. 

We now show that (i) and (ii) implies that $f^*$ is onto, which is a contradiction.
Let $\A^d=(X,\L)$ and $\B^d=(Y,\G)$. It suffices to show that $\Gamma((f^*)^d)$ is onto (Here we are taking a double dual) . 
So suppose $\sigma\in \Gamma(Y,\G)$. For each $x\in Y$, 
$k(x,-)$ is onto so $k(x,t)=\sigma(x)$ for some $t\in \L_{h(x)}$. That is $t=\tau_x(h(x))$ for some 
$\tau_x\in \Gamma(X,\G)$. Hence there is a clopen neighborhood $N_x$ of $x$ such that 
$\Gamma(f^*)^d)(\tau_x)(y)=\sigma(y)$ for all $y\in N_x$. 
Since $h$ is one to one and $X,Y$ are Boolean spaces, we get that $h(N_x)$ is clopen in $h(Y)$ and there is a 
clopen set $M_x$ in $X$ such that $h(N_x)=M_x\cap h(Y)$. Using compactness, there exists a partition of
$X$ into clopen subsets $M_0\ldots M_{k-1}$ and sections $\tau_i\in \Gamma(M_i,L)$ such that
$$k(y,\tau_i(h(y))=\sigma(y)$$ 
wherever $h(x)\in M_i$ for $i<k$. Defining 
$\tau$ by $\tau(z)=\tau_i(z)$ whenever $z\in M_i$ $i<k$, it follows that $\tau\in \Gamma(X,\L)$ and $\Gamma((f^*)^d)\tau=\sigma$.
Thus $\Gamma((f^*)^d)$ is onto $\Gamma(\B^d)$, and we are done. 
\end{demo}
And as an application, using known results, we readily obtain:

\begin{corollary}
\begin{enumarab}
\item Epimorphisms are not surjective in simple cylindric algebras, quasipolyadic algebras and Pinters algebras of infinite dimension
\item Epimorphisms are not surjecive in simple cylindric lattices of infinite dimension
\end{enumarab}
\end{corollary}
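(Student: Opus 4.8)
The plan is to derive the corollary straight from the preceding theorem, feeding it the right instances of $V$ together with two facts that are already available in the literature on algebraic logic. For part~(1) I would take $V$ to be, in turn, $\CA_\alpha$, $\QA_\alpha$ and $\SC_\alpha$ (Pinter's substitution algebras) for an infinite ordinal $\alpha$; each is a discriminator-type variety whose members carry a Boolean, hence distributive bounded lattice, reduct, and the cylindrifications (or quantifier operators) preserve joins, are idempotent, fix $0$ and $1$, and satisfy the dimension-set condition, so each such algebra is a $BLO$ in the sense of the paper and the hypotheses of the theorem on $V$ are met. The theorem then reduces the whole corollary to checking its two antecedents: that the simple algebras of the class have the amalgamation property, and that there is a non-surjective epimorphism between two \emph{strongly regular} algebras of the class.

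For the first antecedent I would invoke the known amalgamation results: the simple algebras in $\CA_\alpha$, $\QA_\alpha$ and $\SC_\alpha$ --- equivalently, those whose zero-dimensional part is the two-element Boolean algebra --- satisfy $(AP)$, which is essentially the amalgamation property for the associated representable classes read off on the simple members, going back to Comer and to N\'emeti. For the second antecedent I would use the remark made just above the theorem that Lindenbaum--Tarski algebras are strongly regular: for a non-complete theory $T$ in the relevant (first-order, equality-free, or substitution) predicate logic, $\Fm_T$ is strongly regular and not simple, and the known failure of the Beth definability property at the level of the \emph{full} infinite-dimensional variety --- as opposed to its locally finite fragment --- supplies an epimorphism $f:\Fm_{T_1}\to\Fm_{T_2}$ between such algebras that is not surjective. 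With both antecedents in hand, the theorem yields that $ES$ fails in the class of simple algebras, which is precisely part~(1); for the three named varieties this is what is asserted.

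Part~(2) is the same argument transported from the $BAO$ setting to the genuine $BLO$ setting: take $V$ to be the class of cylindric lattices of infinite dimension (distributive bounded lattices with cylindric-type operators), use that simple cylindric lattices have $(AP)$ and that, for an incomplete theory in the corresponding lattice-based predicate logic, $\Fm_T$ is again strongly regular and not simple, and that the appropriate definability property fails for the full infinite-dimensional class. The theorem then delivers the failure of $ES$ among simple cylindric lattices.

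The application of the theorem itself is formal; the main obstacle is pinning down and citing the non-surjective epimorphism between strongly regular algebras with the right care. One must work in the full infinite-dimensional variety, where non-locally-finite algebras live and the Beth-style ``make the implicit definition explicit'' argument breaks down, rather than in the locally finite fragment, and for part~(2) one must verify that a workable analogue of that classical counterexample survives for lattice-based logics, where Boolean complementation --- used in the usual construction --- is no longer available. Checking $(AP)$ for the simple algebras of each listed class is routine given the literature, but should be stated with care, since amalgamation is delicate for these varieties.
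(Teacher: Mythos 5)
Your proposal is correct and follows essentially the same route as the paper: both parts are obtained by feeding the preceding theorem the appropriate variety, using amalgamation for the simple algebras, and citing the known construction (Mad\'arasz--Sayed Ahmed, \emph{Algebra Universalis} 2007, for part (1); an analogous lattice-based construction for part (2)) of two strongly regular algebras whose inclusion is a non-surjective epimorphism. The paper's own proof is just this citation, so your fleshing-out of the two antecedents --- and your caution that the counterexample must live in the full infinite-dimensional, non-locally-finite setting --- matches its intent exactly.
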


\begin{proof} (1) Cf. \cite{AUU} where two strongly regular algebras $\A\subseteq \B$ 
are constructed such that the inclusion is an epimorphism that is not 
surjective. 

(2) In a preprint of ours two strongly regular algebras $\A\subseteq \B$ 
are constructed, and the inclusion is not an epimorphism
\end{proof}

There is a very thin line between the superamalgamation $(SUPAP)$ and the strong amalgamation property $(SAP)$. 
However, Maksimova and Sagi- Shelah constructed varieties of $BAO$s with $SAP$ but not $SUPAP$, 
the latter is a variety of representable cylindric 
algebras. The second item of the next corollary makes one cross this line.

\begin{corollary}
\begin{enumarab}
\item  Let $V$ be a variety of $BAO$s such that every semisimple algebra is regular. Then if $ES$ holds for simple algebras, then it holds for 
semisimple algebras.
\item Let $V$ be a variety that has the strong amalgamation property, such that the simple algebras have $ES$. 
Then $V$ has the superamalgamation property.
\end{enumarab}
\end{corollary}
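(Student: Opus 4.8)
The plan is to obtain both items as applications of the preceding Theorem on epimorphisms together with the structure theory of regular algebras built up above; throughout I use that a $BAO$, having a relatively complemented distributive bounded lattice reduct, lies within the scope of the equivalences relating strong regularity, the ideal lattice of $\Zd$, and semisimplicity of $\delta$, and that for $BAO$s the prime and maximal ideals of the Boolean algebra $\Zd\A$ coincide, so that ``regular'' and ``strongly regular'' come to the same thing once one checks that the stalks $\G_x=\A/\Ig^{\A}(x)$ over maximal ideals $x$ of $\Zd\A$ are the two-element (hence simple) algebra.

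For (1) I would argue by contraposition. Suppose $ES$ fails among the semisimple members of $V$, witnessed by an epimorphism $f\colon\A\to\B$ of semisimple algebras of $V$ that is not onto. By hypothesis $\A$ and $\B$ are regular, hence, by the preceding remark, strongly regular. Feeding $\A,\B,f$ into the preceding Theorem on epimorphisms yields that $ES$ fails among the simple algebras of $V$, contradicting the hypothesis and finishing the argument. The one ingredient of that Theorem not literally present in the corollary is that the simple algebras of $V$ satisfy $AP$; I would either adopt this as a standing assumption for the $BAO$-varieties in question, or --- the sharper route --- verify directly that, when $\A$ and $\B$ are semisimple, the first component $h$ of the dual sheaf morphism $\B^d\to\A^d$ is already injective, so that the collapse $h(x)=h(y)$ which is the sole place $AP$ enters the proof of the Theorem simply does not arise. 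Carrying out this last verification is, I expect, the only genuine work in (1).

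For (2) I would first note that $SAP$ implies $AP$, and in particular that the simple algebras of $V$ inherit the amalgamation property, so the hypothesis of the preceding Theorem is met; combined with the assumed $ES$ for simple algebras, its contrapositive gives $ES$ for all strongly regular members of $V$, and, running the same sheaf-theoretic argument with the identification $h(x)=h(y)$ excluded as above, for all epimorphisms in $V$. It then remains to promote ``$AP$ together with $ES$'' to $SUPAP$: this is the standard equivalence --- in the spirit of N\'emeti's identification of surjectivity of epimorphisms with Beth-style definability --- that a variety with $AP$ in which epimorphisms are surjective has $SUPAP$, and the extra disjointness carried by $SAP$ is exactly what lets one upgrade the amalgam produced there to a superamalgam. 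I expect the main obstacle in (2) to be precisely the step ``$ES$ for simple algebras $\Rightarrow$ $ES$ for $V$'': the duality machinery controls strongly regular algebras directly, and lifting the conclusion to arbitrary members of $V$ --- by analysing an epimorphism through its behaviour on the simple stalks and recombining via the sheaf representation --- is where care is needed, the amalgamation bookkeeping being routine once $ES$ is in hand.
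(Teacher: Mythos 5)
Your argument for item (2) is, up to taking the contrapositive, the same as the paper's: the paper proves only (2), and does so in one line by noting that if $SUPAP$ failed then, since $V$ has $SAP$ and $SAP$ together with $ES$ is equivalent to $SUPAP$, $ES$ would fail in $V$, hence (via the preceding theorem) already in the simple algebras, a contradiction. The two worries you flag are genuine, and the paper's own proof silently skips both: the preceding theorem only converts a non-surjective epimorphism between \emph{strongly regular} algebras into a failure of $ES$ for simple ones, so one still needs to know that a failure of $ES$ in $V$ can be witnessed by strongly regular algebras (or carry out the lifting you describe); and the statement of (1) indeed omits the hypothesis that the simple algebras of $V$ have $AP$, which the theorem requires --- the paper gives no proof of (1) at all, so your sketch, which is the evident contrapositive application of the theorem using regular $=$ strongly regular for $BAO$s (prime $=$ maximal in Boolean algebras), is a genuine addition rather than a deviation. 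One small slip in your preamble: for a strongly regular $BAO$ the stalks $\G_x=\A/\Ig^{\A}(x)$ over maximal ideals $x$ of $\Zd\A$ are simple but not in general two-element (that happens only in the pure Boolean case with no operators); fortunately only simplicity of the stalks is ever used, so this does not affect the argument.
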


\begin{proof} We only prove the second part. If $SUPAP$ fails in $V$, then $ES$ does, because $V$ has $SAP$ 
and both together are equivalent to $SUPAP$, but then $ES$ fails in simple algebras 
and this is a contradiction.
\end{proof}


\begin{thebibliography}{99}
\bibitem{AUU} Mad\'arasz, J. and Sayed Ahmed T.,
{\it Amalgamation, interpolation and epimorphisms.}
Algebra Universalis {\bf 56} (2) (2007), p. 179 - 210.
\end{thebibliography}
\end{document}